\newtheorem{theorem}{Theorem}[section]
\newtheorem{lemma}[theorem]{Lemma}
\theoremstyle{definition}
\theoremstyle{remark}
\numberwithin{equation}{section}
\newcommand{\mmod}[1]{\,\,(\text{\rm mod}\,\,#1)}
\def\bfa{{\mathbf a}}
\def\bfh{{\mathbf h}}
\def\bfn{{\mathbf n}}
\def\bfu{{\mathbf u}}
\def\bfv{{\mathbf v}}
\def\bfx{{\mathbf x}}
\def\bfy{{\mathbf y}}
\def\bfz{{\mathbf z}}
\def\calI{{\mathcal I}}
 \def\Ktil{{\widetilde K}}
\def\Ktil{\widetilde K}
\def\dbN{{\mathbb N}}  
\def\dbR{{\mathbb R}}
\def\dbZ{{\mathbb Z}}\def\dbQ{{\mathbb Q}}
\def\grA{{\mathfrak A}}
\def\grB{{\mathfrak B}}
\def\grf{{\mathfrak f}}\def\grF{{\mathfrak F}}
\def\grg{{\mathfrak g}}
\def\grJ{{\mathfrak J}}
\def\grk{{\mathfrak k}} \def\grK{{\mathfrak K}}
\def\grm{{\mathfrak m}}\def\grM{{\mathfrak M}}
\def\grN{{\mathfrak N}}\def\grn{{\mathfrak n}}
\def\grS{{\mathfrak S}}\def\grP{{\mathfrak P}}
\def\grW{{\mathfrak W}}\def\grB{{\mathfrak B}}
\def\grK{{\mathfrak K}}\def\grp{{\mathfrak p}}
 \def\grX{{\mathfrak X}}
\def\grW{{\mathfrak W}}
\def\alp{{\alpha}} \def\bfalp{{\boldsymbol \alpha}} 
\def\bet{{\beta}}  \def\bfbet{{\boldsymbol \beta}}
\def\gam{{\gamma}} \def\Gam{{\Gamma}}
\def\bfgam{{\boldsymbol \gam}} 
\def\del{{\delta}} \def\Del{{\Delta}}
\def\tet{{\theta}}  
\def\Tet{{\Theta}} 
\def\lam{{\lambda}}
\def\eps{\varepsilon}
\def\le{\leqslant} \def\ge{\geqslant}
\def\d{{\,{\rm d}}}
\begin{document}
\title[Cubic Vinogradov system]{Subconvexity in the inhomogeneous\\ cubic Vinogradov 
system}
\author[Trevor D. Wooley]{Trevor D. Wooley}
\address{Department of Mathematics, Purdue University, 150 N. University Street, West 
Lafayette, IN 47907-2067, USA}
\email{twooley@purdue.edu}
\subjclass[2010]{11P55, 11L07, 11D72}
\keywords{Subconvexity, Vinogradov's mean value theorem, Hasse principle.}
\thanks{The author's work is supported by NSF grants DMS-2001549 and DMS-1854398.}
\date{}
\dedicatory{}
\begin{abstract}When $\bfh\in \dbZ^3$, denote by $B(X;\bfh)$ the number of integral 
solutions to the system
\[
\sum_{i=1}^6(x_i^j-y_i^j)=h_j\quad (1\le j\le 3),
\]
with $1\le x_i,y_i\le X$ $(1\le i\le 6)$. When $h_1\ne 0$ and appropriate local solubility 
conditions on $\bfh$ are met, we obtain an asymptotic formula for $B(X;\bfh)$, thereby 
establishing a subconvex local-global principle in the inhomogeneous cubic Vinogradov 
system. We obtain similar conclusions also when $h_1=0$, $h_2\ne 0$ and $X$ is 
sufficiently large in terms of $h_2$. Our arguments involve minor arc estimates going 
beyond square-root cancellation.
\end{abstract}
\maketitle

\section{Introduction} The application of the Hardy-Littlewood (circle) method in the 
asymptotic analysis of the number of integral solutions of a Diophantine system is, with few 
exceptions, limited to scenarios in which the number of variables is larger than twice the 
total degree of the system. This {\it convexity barrier} arises from the relative sizes of the 
putative main term, given by the product of local densities associated with the system, and 
the most optimistic bound anticipated for the error term, namely the square-root of the 
number of choices for the variables. Almost all of the exceptions to this rule are inherently 
linear \cite{GT2010} or quadratic \cite{Est1962, HB1996, Klo1927} in nature. There is work 
on pairs of diagonal cubic forms of special shape in $11$ or more variables \cite{BW2014}, 
and also an asymptotic formula for a special system consisting of one diagonal cubic and 
two linear equations in $10$ variables \cite{BW2019}. Recently, the author \cite{Woo2021} 
succeeded in breaking the convexity barrier for the Hilbert-Kamke problem of degree $k$, 
establishing an asymptotic formula for the number of solutions when the number of 
variables is at least $k(k+1)$. We turn our attention in this memoir to a related problem in 
which latent translation-dilation invariance obstructs the method of \cite{Woo2021}.\par

In order to describe our conclusions we must introduce some notation. Let $s$ be a 
positive number and $\bfh=(h_1,h_2,h_3)$ a triple of integers. When $X$ is a 
large real number, write
\begin{equation}\label{1.1}
f(\bfalp;X)=\sum_{1\le x\le X}e(\alp_1x+\alp_2x^2+\alp_3x^3),
\end{equation}
where $e(z)$ denotes $e^{2\pi iz}$. We consider the twisted mean value
\begin{equation}\label{1.2}
B_s(X;\bfh)=\int_{[0,1)^3}|f(\bfalp;X)|^{2s}e(-\bfalp \cdot \bfh)\d\bfalp ,
\end{equation}
in which we write $\bfalp\cdot\bfh$ for $\alp_1h_1+\alp_2h_2+\alp_3h_3$. Note that when 
$s\in \dbN$, it follows via orthogonality that the mean value $B_s(X;\bfh)$ counts the 
number of integral solutions of the system of equations
\begin{equation}\label{1.3}
\sum_{i=1}^s(x_i^j-y_i^j)=h_j\quad (1\le j\le 3),
\end{equation}
with $1\le x_i,y_i\le X$ $(1\le i\le s)$. This is the inhomogeneous cubic Vinogradov system 
of the title.\par

In order to describe asymptotic formulae associated with $B_s(X;\bfh)$, we introduce the 
generating functions
\begin{equation}\label{1.4}
I(\bfbet)=\int_0^1e(\bet_1\gam+\bet_2\gam^2+\bet_3\gam^3)\d\gam
\end{equation}
and
\begin{equation}\label{1.5}
S(q,\bfa)=\sum_{r=1}^q e_q(a_1r+a_2r^2+a_3r^3),
\end{equation}
in which $e_q(u)$ denotes $e^{2\pi i u/q}$. Next, put $n_j=h_jX^{-j}$ $(1\le j\le 3)$, and 
define
\begin{equation}\label{1.6}
\grJ_s(\bfh)=\int_{\dbR^3}|I(\bfbet )|^{2s}e(-\bfbet \cdot \bfn)\d\bfbet
\end{equation}
and
\begin{equation}\label{1.7}
\grS_s(\bfh)=\sum_{q=1}^\infty \sum_{\substack{1\le a_1,a_2,a_3\le q\\ 
(q,a_1,a_2,a_3)=1}}\left|q^{-1}S(q,\bfa)\right|^{2s}e_q(-\bfa \cdot \bfh).
\end{equation}
We note that the {\it singular integral} $\grJ_s(\bfh)$, and {\it singular series} 
$\grS_s(\bfh)$, are known to converge absolutely for $s>7/2$, and $s>4$, respectively 
(see \cite[Theorem 1]{Ark1984} or \cite[Theorem 3.7]{AKC2004}). 

\begin{theorem}\label{theorem1.1} Suppose that $\bfh\in \dbZ^3$ and $h_1\ne 0$. Let 
$s$ be a natural number with $s\ge 6$. Then whenever $X$ is sufficiently large in terms of 
$s$, one has
\begin{equation}\label{1.8}
B_s(X;\bfh)=\grJ_s(\bfh)\grS_s(\bfh)X^{2s-6}+o(X^{2s-6}),
\end{equation}
in which $0\le \grJ_s(\bfh)\ll 1$ and $0\le \grS_s(\bfh)\ll 1$. If the system (\ref{1.3}) 
possesses a non-singular real solution with positive coordinates, moreover, then 
$\grJ_s(\bfh)\gg 1$. Likewise, if the system (\ref{1.3}) possesses primitive non-singular 
$p$-adic solutions for each prime $p$, then $\grS_s(\bfh)\gg 1$.
\end{theorem}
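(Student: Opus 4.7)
The plan is to apply the three-dimensional Hardy-Littlewood circle method to (\ref{1.2}), with a Farey-type dissection of $[0,1)^3$ into major and minor arcs. The main term in (\ref{1.8}) will arise from the major arcs, while the minor arc contribution must be shown to be $o(X^{2s-6})$; at the critical value $s=6$ this demands a minor arc bound that breaks the convex (square-root) barrier, which is the principal obstacle.

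First, I fix a parameter $Q=Q(X)$ growing as a small positive power of $X$, and define the major arcs
\begin{equation*}
\grM=\bigcup_{1\le q\le Q}\,\bigcup_{\substack{1\le a_1,a_2,a_3\le q\\ (q,a_1,a_2,a_3)=1}}\grM(q,\bfa),
\end{equation*}
where $\grM(q,\bfa)=\{\bfalp\in [0,1)^3:|q\alp_j-a_j|\le QX^{-j}\,\,(1\le j\le 3)\}$, with $\grm=[0,1)^3\setmin\grM$ the minor arcs. On each $\grM(q,\bfa)$, use the standard approximation $f(\bfalp;X)\approx q^{-1}S(q,\bfa)v(\bfbet;X)$ with $\bfbet=\bfalp-\bfa/q$ and $v(\bfbet;X)=\int_0^Xe(\bet_1\gam+\bet_2\gam^2+\bet_3\gam^3)\d\gam$. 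A Hua-type $L^{2s}$-moment bound for $f-q^{-1}Sv$, combined with the rescaling $\bfbet_j=\gam_jX^{-j}$ within the integral over $\bfbet$, yields that the major arc contribution equals $\grJ_s(\bfh)\grS_s(\bfh)X^{2s-6}+o(X^{2s-6})$ once the truncated singular series and integral are extended to their full ranges, using the absolute convergence recorded for $s>4$ and $s>7/2$ respectively.

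The heart of the matter is the estimate
\begin{equation*}
\int_\grm |f(\bfalp;X)|^{2s}e(-\bfalp\cdot\bfh)\d\bfalp=o(X^{2s-6}).
\end{equation*}
For $s\ge 7$ this is accessible by classical technique: a Weyl-type pointwise bound $\sup_{\bfalp\in\grm}|f(\bfalp;X)|\ll X^{1-\sig}$ with some $\sig>0$, combined with the cubic Vinogradov mean value theorem $\int_{[0,1)^3}|f(\bfalp;X)|^{12}\d\bfalp\ll X^{6+\eps}$, gives
\begin{equation*}
\int_\grm |f|^{2s}\d\bfalp\le \Bigl(\sup_{\bfalp\in\grm}|f|\Bigr)^{2s-12}\int_{[0,1)^3}|f|^{12}\d\bfalp\ll X^{2s-6-(2s-12)\sig+\eps},
\end{equation*}
which is admissible when $s>6$. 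The case $s=6$ sits exactly at the convexity barrier: Vinogradov's twelfth moment bound already matches the expected main term in size, and the pointwise factor degenerates to unity. Here the oscillation from the twist $e(-\bfalp\cdot\bfh)$, made genuine by the hypothesis $h_1\ne 0$, must be harnessed to supply cancellation beyond that of the triangle inequality on $\grm$. I expect this to require a finer dyadic dissection of $\grm$ according to the quality of rational approximation in the $\alp_1$-direction, followed by iterated efficient congruencing estimates in which the non-vanishing of $h_1$ is exploited to suppress the near-diagonal contribution responsible for the convex-order size of the untwisted twelfth moment.

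Finally, the local positivity assertions rest on standard analysis. When (\ref{1.3}) admits a non-singular real solution with positive coordinates, the implicit function theorem produces a neighbourhood of such solutions contributing positively to the oscillatory integral defining $\grJ_s(\bfh)$, whence $\grJ_s(\bfh)\gg 1$. Similarly, primitive non-singular $p$-adic solutions at every prime $p$ lift by Hensel's lemma to positive local densities, and the Euler product representation of $\grS_s(\bfh)$, together with its absolute convergence for $s>4$, then gives $\grS_s(\bfh)\gg 1$.
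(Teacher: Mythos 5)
Your major arc analysis and the local positivity assertions are standard and match the paper's treatment, and your handling of $s\ge 7$ via a Weyl sup bound raised to the power $2s-12$ against the twelfth moment is exactly the routine argument the paper alludes to. But at the critical case $s=6$ — the only case where the theorem says anything new — your proposal has a genuine gap: you correctly identify that the untwisted twelfth moment is already of the convex order $X^{6+\eps}$ and that the twist $e(-\bfalp\cdot\bfh)$ with $h_1\ne 0$ must somehow be exploited, but the mechanism you suggest (a dyadic dissection in the $\alp_1$-direction plus ``iterated efficient congruencing in which the non-vanishing of $h_1$ is exploited'') is only a hope, not an argument. Efficient congruencing and decoupling control untwisted mean values, which are genuinely of size $X^{6+\eps}$, and no device is offered by which the inhomogeneity $\bfh$ enters those iterations; nothing in your sketch produces the extra power saving needed on the minor arcs.

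The paper's route is quite different and is the actual content of the theorem. One shifts every variable by an integer $z$ with $1\le z\le X$, exploiting the latent translation--dilation invariance of the system: averaging over $z$ converts the twist into the auxiliary exponential sum $g(\bfalp,\Gam;X)=\sum_{1\le y\le X}e\bigl(y\Gam+2h_1y\alp_2+(3h_2y+3h_1y^2)\alp_3\bigr)$, whose argument is \emph{quadratic} in $y$ precisely because $h_1\ne 0$ (Lemma \ref{lemma2.1}). One then proves mixed mean value estimates for $\int|f^{2m}g^6|$ (Lemmata \ref{lemma3.1}--\ref{lemma3.4}), in which the hypothesis $h_1\ne 0$ reduces the $g$-system to a quadratic Vinogradov system, and combines these by H\"older with the cubic Vinogradov mean value theorem and a Weyl bound in the $\alp_3$-variable alone to obtain $I_s(\grm(Q);X;\bfh)\ll X^{2s-6+\eps}Q^{-\del/4}$ with $\del=2s-35/3>0$ at $s=6$ (Lemma \ref{lemma4.1}); the remaining arcs are pruned as in \S\S5--7. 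Without this shifting device, or a concrete substitute showing how the twist yields cancellation beyond square root in the twelfth moment, your proposal does not prove the case $s=6$, and hence does not prove the theorem.
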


Theorem \ref{theorem1.1} delivers a conclusion tantamount to a quantitative form of the 
Hasse principle for the system (\ref{1.3}) at the convexity barrier when $s=6$, and in that 
situation applies a minor arc estimate going beyond square-root cancellation. 
Thus, provided that the latter system admits appropriate non-singular solutions in every 
completion of $\dbQ$, one has an asymptotic formula of the shape $B_6(X;\bfh)\sim CX^6$ 
for a suitable positive number $C$. When $s\ge 7$, the conclusion of Theorem 
\ref{theorem1.1} is a routine consequence of the resolution \cite{Woo2016} of the cubic 
case of the main conjecture in Vinogradov's mean value theorem, as the reader may 
confirm by applying the methods of Arkhipov \cite{Ark1984}. Establishing such a conclusion 
when $s\le 6$, however, requires something of a breakthrough in order that the familiar 
square-root barrier in the circle method be surmounted.\par

Brandes and Hughes \cite{BH2021} have recently investigated the inhomogeneous case of 
Vinogradov's mean value theorem of degree $k$ in the subcritical regime. While this work 
shows, inter alia, that $B_s(X;\bfh)=o(X^s)$ for $s\le 5$, their methods fall short of 
providing conclusions for the critical exponent $s=6$ addressed by Theorem 
\ref{theorem1.1}. We remark that quantitative aspects of their conclusions have been 
sharpened in forthcoming work \cite{Woo2021b} of the author.\par

Theorem \ref{theorem1.1} addresses no scenario in which $h_1=0$. Although we are 
unable to obtain uniform conclusions in such a situation, we do obtain asymptotic formulae 
when $h_2\ne 0$ and $X$ is sufficiently large in terms of $h_2$.

\begin{theorem}\label{theorem1.2}
Let $s$ be a natural number with $s\ge 6$. Then the asymptotic formula (\ref{1.8}) holds 
when $h_2\ne 0$, and $X$ is sufficiently large in terms of $h_2$.
\end{theorem}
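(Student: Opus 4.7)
Since Theorem~\ref{theorem1.1} already handles the case $h_1\ne 0$, the novel content of Theorem~\ref{theorem1.2} lies in the regime $h_1=0$, $h_2\ne 0$, on which I shall concentrate. The essential structural feature here is the latent translation invariance of \eqref{1.3}: for any $c\in\dbZ$, the substitution $(x_i,y_i)\mapsto (x_i+c,y_i+c)$ maps the solution set counted by $B_s(X;0,h_2,h_3)$ bijectively onto that counted with right-hand side $(0,h_2,h_3+3ch_2)$ and variables in $[1+c,X+c]$. The plan is to exploit this symmetry to synthesise a usable substitute for the nonzero $h_1$ on which Theorem~\ref{theorem1.1} relies.

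\emph{Circle method setup.} I would retain the Hardy--Littlewood dissection underlying Theorem~\ref{theorem1.1}. The major arc treatment is insensitive to the vanishing of $h_1$ and again produces the main term $\grJ_s(\bfh)\grS_s(\bfh)X^{2s-6}$ with admissible error. It then suffices to establish the minor arc bound
\[
\int_{\grm}|f(\bfalp;X)|^{2s}e(-\alp_2h_2-\alp_3h_3)\,\d\bfalp = o(X^{2s-6}).
\]

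\emph{Translation averaging.} For a parameter $C$ to be optimised, I would average the translation bijection above over integers $|c|\le C$. Each shifted box $[1+c,X+c]$ differs from $[1,X]$ in a set of measure $O(|c|)$, and by the resolved cubic case of Vinogradov's mean value theorem, replacing these boxes by $[1,X]$ in the translated counts introduces a boundary loss of $O(|c|X^{2s-7+\eps})$ per summand. Fourier expanding the resulting sum yields
\[
B_s(X;0,h_2,h_3)=\frac{1}{2C+1}\int_{[0,1)^3}|f(\bfalp;X)|^{2s}e(-\alp_2h_2-\alp_3h_3)D_C(3h_2\alp_3)\,\d\bfalp+O(CX^{2s-7+\eps}),
\]
where $D_C(\tet)=\sum_{|c|\le C}e(c\tet)$ is the Dirichlet kernel. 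The weight $D_C(3h_2\alp_3)/(2C+1)$ concentrates mass, up to logarithmic factors, near the rationals $\alp_3\in\{k/(3|h_2|):0\le k<3|h_2|\}$. Near each such rational, a local change of variables transforms the contribution into an integral governed by an effective linear phase whose coefficient is a nonzero multiple of $h_2$. This effective phase plays the structural role that $h_1$ plays in Theorem~\ref{theorem1.1}, and the super-square-root cancellation mechanism of that theorem can be transferred to the present setting with $h_2$ as the driving non-trivial coefficient. Summing the local savings over the $O(|h_2|)$ concentration points and controlling the complement via the decay of $D_C$ should yield the required bound, provided $C$ is chosen large enough for the concentration to be effective while $CX^{2s-7+\eps}=o(X^{2s-6})$; feasibility of this balance is precisely the hypothesis that $X$ be sufficiently large in terms of $h_2$.

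\emph{Principal obstacle.} The hardest part, as in Theorem~\ref{theorem1.1}, is the minor arc estimate, and specifically executing the transfer of the Theorem~\ref{theorem1.1} mechanism to the present kernel-weighted setting. One must verify that the localisation provided by $D_C$ genuinely reproduces the cancellation structure of Theorem~\ref{theorem1.1} with $h_2$ in place of $h_1$, and track the quantitative dependence on $|h_2|$ to ensure that the resulting threshold on $X$ suffices for every estimate to go through uniformly on the minor arcs.
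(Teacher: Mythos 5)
Your starting point is sound: with $h_1=0$ the shift $(x_i,y_i)\mapsto(x_i+c,y_i+c)$ sends the right-hand side $(0,h_2,h_3)$ to $(0,h_2,h_3+3ch_2)$, and averaging over shifts is indeed the germ of the argument (it is the $h_1=0$ specialisation of the shift lemma used for Theorem \ref{theorem1.1}). But the proposal has a genuine gap at exactly the point you defer to the ``principal obstacle''. When $h_1\ne 0$, the shifted system carries the \emph{quadratic} polynomial $h_3+3h_2z+3h_1z^2$, and it is this quadratic term (via the auxiliary sum $g(\bfalp,\tet;X)$ and the quadratic Vinogradov count behind Lemma \ref{lemma3.1}) that produces the mean values $\Tet_1,\Tet_3,\Tet_5$ and hence the super-square-root saving on $\grW_1$. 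When $h_1=0$ the shift contributes only the linear phase $3h_2c\alp_3$, and — as the paper stresses in the introduction — a linear polynomial offers insufficient scope for this mechanism. Your Dirichlet-kernel averaging makes this concrete: the weight $D_C(3h_2\alp_3)/(2C+1)$ merely localises $\alp_3$ to $O(|h_2|)$ neighbourhoods of the rationals $k/(3h_2)$ of width about $1/C$, and on each such neighbourhood you still face the full integral of $|f(\bfalp;X)|^{2s}$ over $(\alp_1,\alp_2)\in[0,1)^2$, for which no cancellation has been generated; the assertion that ``an effective linear phase with coefficient a nonzero multiple of $h_2$ plays the structural role of $h_1$'' does not survive inspection, because the role of $h_1$ in Theorem \ref{theorem1.1} is played by the quadratic term $3h_1y^2\alp_3$ in $g$, not by any linear phase. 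So the minor-arc estimate is not ``transferred''; it is missing.

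What is actually needed, and what the paper supplies, is a new mean value input to replace Lemma \ref{lemma3.4} in the regime $h_1=0$, $h_2\ne 0$. The paper proves (Lemma \ref{lemma8.1}) that $\int_{[0,1)^3}|f(\bfalp;2X)^{10}g(\bfalp;X)^2|\d\bfalp\ll_{h_2}X^{6+\eps}$, by interpreting the count as solutions with $\sum_i(x_i^3-y_i^3)=3h_2(z_1-z_2)$ and $\sum_i(x_i^2-y_i^2)=\sum_i(x_i-y_i)=0$, passing to the localised integral $\int_{-1/X}^{1/X}\int_0^1\int_0^1|f(\bfalp;2X)|^{10}\d\bfalp$, and bounding the latter by $X^{4+\eps}$ via the small cap decoupling theorem of Demeter, Guth and Wang \cite{DGW2020}. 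Feeding the resulting $h_2$-dependent bound (8.5) into the machinery of \S\S4--7 is what yields Theorem \ref{theorem1.2}, and the dependence on $h_2$ in that estimate is precisely why $X$ must be large in terms of $h_2$. Your proposal contains no substitute for this decoupling input, and without it (or something of comparable strength on the localised tenth moment) the kernel-weighted minor-arc integral cannot be shown to be $o(X^{2s-6})$; indeed, an estimate of exactly this localised type is what your concentration points $k/(3h_2)$ would force you to prove in any case.
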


As a consequence of Fermat's theorem, the system (\ref{1.3}) has solutions only when 
$h_3\equiv h_1\mmod{3}$ and $h_3\equiv h_2\equiv h_1\mmod{2}$. Theorems 
\ref{theorem1.1} and \ref{theorem1.2} offer local-global principles incorporating such 
conditions. More significant is the proof of an asymptotic formula at the convexity barrier, 
wherein we have $12$ variables available and the sum of the degrees of the underlying 
equations is $6$. Hitherto, no such conclusion has been available for inhomogeneous 
Vinogradov systems of degree exceeding $2$. Unfortunately, our methods yield no 
conclusion analogous to Theorem \ref{theorem1.1} for Vinogradov systems of degree 
exceeding $3$.\par

We prove Theorem \ref{theorem1.1} by applying the circle method, a key ingredient in our 
argument being an estimate for the contribution of the minor arcs beyond square-root 
cancellation. This we achieve in \S\S2, 3 and 4 by adapting the author's work on the 
asymptotic formula in Waring's problem (see \cite{Woo2012b}). Ignoring for now the 
restriction to minor arcs, we observe that an integral shift $z$, with $1\le z\le X$, in every 
variable in the system (\ref{1.3}) generates the related system
\begin{equation}\label{1.9}
\left.{\begin{aligned}
\sum_{i=1}^s(u_i^3-v_i^3)&=h_3+3h_2z+3h_1z^2\\
\sum_{i=1}^s(u_i^2-v_i^2)&=h_2+2h_1z\\
\sum_{i=1}^s(u_i-v_i)&=h_1
\end{aligned}}\right\}
\end{equation}
in which $1\le u_i,v_i\le 2X$. There is now the potential for additional averaging using this 
new variable $z$. Were the polynomials on the right hand side of (\ref{1.9}) to have 
respective degrees $3$, $2$ and $1$, then an appropriate minor arc estimate would follow 
at once via Weyl's inequality. However, the degrees of the polynomials are too small 
for such a simple treatment to apply, and instead we must relate the system to auxiliary 
mixed systems. It is critical here that available Weyl estimates for cubic polynomials are 
relatively strong. Weaker estimates available for larger degrees are insufficient for our 
purposes. It is vital, moreover, that in these auxiliary mixed systems the degree of the 
polynomial $h_3+3h_2z+3h_1z^2$ be at least $2$. Indeed, were $h_1$ to be $0$, the 
resulting linear polynomial would offer insufficient scope for obtaining minor arc estimates of 
sufficient strength for application in the proof of Theorem \ref{theorem1.1}.\par

Having prepared the auxiliary lemma exploiting shifts in \S2, we prepare in \S3 the auxiliary 
mean value estimates required in \S4 for the derivation of our basic minor arc estimate 
breaking the classical convexity barrier. In \S5 we describe the Hardy-Littlewood dissection
required in our proof of Theorem \ref{theorem1.1}, and we reinterpret the conclusion of 
\S4 as a minor arc estimate in a form convenient for the application at hand. Some pruning 
manoeuvres convert this bound into two estimates more classically associated with minor 
arcs in \S6. From here, it remains in \S7 to analyse the contribution of the major arcs, and 
thereby we complete the proof of Theorem \ref{theorem1.1} drawing heavily on the work 
of Arkhipov \cite{Ark1984}. We devote \S8 to the discussion of the scenario in which 
$h_1=0$, and the proof of Theorem \ref{theorem1.2}. Here, at the cost of sacrificing 
uniformity with respect to $\bfh$ in our conclusions, it transpires that one may make use of 
recent work on small cap decouplings \cite{DGW2020} in order to salvage a viable analysis.
\par

Our basic parameter is $X$, a sufficiently large positive number. Whenever $\eps$ appears 
in a statement, either implicitly or explicitly, we assert that the statement holds for each 
$\eps>0$. Implicit constants in Vinogradov's notation $\ll$ and $\gg$ may depend on 
$\eps$. Vector notation in the form $\bfx=(x_1,\ldots,x_r)$ is used with the dimension $r$ 
depending on the course of the argument. Also, we write $(a_1,\ldots ,a_s)$ for the 
greatest common divisor of the integers $a_1,\ldots ,a_s$, ambiguity between ordered 
$s$-tuples and corresponding greatest common divisors being easily resolved by context. 
Finally, we write $\|\tet\|$ for $\min\{|\tet-m|:m\in \dbZ\}$.

\section{An auxiliary mean value estimate via shifts} Our starting point is a method applied 
in the proof of \cite[Theorem 2.1]{Woo2012b}, whereby the latent translation-dilation 
invariance of the system (\ref{1.3}) is applied to generate additional cancellation. Define 
$f(\bfalp;X)$ as in (\ref{1.1}). Then, when $\bfh\in \dbZ^3$ and $\grB\subseteq \dbR$ is 
measurable, we put
\begin{equation}\label{2.1}
I_s(\grB;X;\bfh)=\int_\grB\int_0^1\int_0^1|f(\bfalp;X)|^{2s}e(-\bfalp\cdot \bfh)\d\bfalp ,
\end{equation}
in which $\bfalp\cdot \bfh=\alp_1h_1+\alp_2h_2+\alp_3h_3$ and $\d\bfalp$ denotes 
$\d\alp_1\d\alp_2\d\alp_3$. Thus, in particular, we find from (\ref{1.2}) that 
$I_s([0,1);X;\bfh)=B_s(X;\bfh)$. We also make use of the auxiliary generating function
\begin{equation}\label{2.2}
g(\bfalp, \tet;X)=\sum_{1\le y\le X}e\left( y\tet+2h_1y\alp_2+(3h_2y+3h_1y^2)\alp_3
\right) .
\end{equation}

\begin{lemma}\label{lemma2.1} Suppose that $s\in \dbN$, $\bfh\in \dbZ^3$ and 
$\grB\subseteq \dbR$ is measurable. Then
\[
I_s(\grB;X;\bfh)\ll X^{-1}(\log X)^{2s}\sup_{\Gam\in [0,1)}\int_\grB \int_0^1\int_0^1 
|f(\bfalp;2X)^{2s}g(\bfalp,\Gam;X)|\d\bfalp .
\]
\end{lemma}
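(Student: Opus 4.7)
The strategy exploits the translation-dilation invariance of the system (\ref{1.3}): for an integer shift $z$ with $1\le z\le X$, the substitution $x_i\to u_i-z$, $y_i\to v_i-z$ converts solutions in $[1,X]^{2s}$ into solutions in $[z+1,z+X]^{2s}\subseteq[1,2X]^{2s}$ of the shifted system with right-hand side $h'(z):=(h_1,\,h_2+2zh_1,\,h_3+3zh_2+3z^2h_1)$. On the Fourier side this corresponds to the linear change of variables $\bfalp\mapsto\bfalp^{(z)}:=(\alp_1-2z\alp_2+3z^2\alp_3,\,\alp_2-3z\alp_3,\,\alp_3)$, which has Jacobian $1$, preserves $\dbZ^3$ (and hence the torus), and satisfies the crucial identity $\bfalp\cdot\bfh=\bfalp^{(z)}\cdot h'(z)$. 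Writing $f_z(\bfbet;X):=\sum_{z+1\le u\le z+X}e(\bet_1 u+\bet_2 u^2+\bet_3 u^3)$, one has $|f(\bfalp;X)|=|f_z(\bfalp^{(z)};X)|$, which underlies the entire argument.

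My plan employs two main tools. The first is the trivial averaging
\[
|f(\bfalp;X)|^{2s}e(-\bfalp\cdot\bfh)=\frac{1}{X}\sum_{z=1}^{X}|f_z(\bfalp^{(z)};X)|^{2s}e(-\bfalp^{(z)}\cdot h'(z)),
\]
valid since each summand equals the left-hand side by the shift invariance. The second is the Dirichlet-kernel representation $\mathbf{1}_{u\in[z+1,z+X]}=\int_0^1\overline{D_X(\psi)}\,e((u-z)\psi)\,\d\psi$ with $D_X(\psi)=\sum_{1\le v\le X}e(v\psi)$ and $\|D_X\|_1\ll\log X$, which yields
\[
f_z(\bfbet;X)=\int_0^1 \overline{D_X(\psi)}\,e(-z\psi)\,f(\bfbet+(\psi,0,0);2X)\,\d\psi.
\]
Hölder's inequality then furnishes the pointwise bound
\[
|f_z(\bfbet;X)|^{2s}\ll(\log X)^{2s-1}\int_0^1|D_X(\psi)|\,|f(\bfbet+(\psi,0,0);2X)|^{2s}\,\d\psi.
\]

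Combining these ingredients in $I_s(\grB;X;\bfh)$, changing variables $\bfbet=\bfalp^{(z)}$, and moving the $z$-sum inside the $\bfbet$- and $\psi$-integrals, the essential computation is
\[
\sum_{z=1}^{X}e(-z\psi-z(2\bet_2h_1+3\bet_3h_2)-3h_1\bet_3z^2)=\overline{g(\bfbet,\psi;X)},
\]
matching term-by-term against the definition of $g(\bfbet,\Gam;X)$ with $\Gam=\psi$. Bounding by $\sup_{\Gam\in[0,1)}|g(\bfbet,\Gam;X)|$ decouples the $g$-factor from $\psi$, and the remaining outer $\psi$-integral is controlled by $\|D_X\|_1\ll\log X$, supplying the last $\log X$ to produce $(\log X)^{2s}$ in total. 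The factor $X^{-1}$ comes from the averaging identity.

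The main technical obstacle is preserving the $\grB$-restriction on $\alp_1$ through both the change of variables $\bfalp\to\bfbet=\bfalp^{(z)}$, which mixes $\alp_1$ with $\alp_2,\alp_3$ in a $z$-dependent way, and the Dirichlet shift $\bfbet\to\bfbet+(\psi,0,0)$ in the argument of $f(\,\cdot\,;2X)$. Resolving this requires careful use of periodicity modulo $\dbZ^3$ together with a final change of variable in $\bet_1$ that absorbs the $z$- and $\psi$-dependent translations of the $\bet_1$-domain, so that after reindexing the restriction $\bfalp\in\grB\times[0,1)^2$ is recovered on the right-hand side of the claimed bound.
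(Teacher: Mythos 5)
Your strategy is in spirit the same as the paper's (average over integral shifts, complete the shifted sum with a Dirichlet kernel, and carry out the shift-sum to manufacture the exponential sum $g$), and your algebra is sound: the transformed inhomogeneity $h'(z)$, the identity $\bfalp\cdot\bfh=\bfalp^{(z)}\cdot h'(z)$, and the evaluation $\sum_{z\le X}e(-z\psi-z(2h_1\bet_2+3h_2\bet_3)-3h_1\bet_3z^2)=\overline{g(\bfbet,\psi;X)}$ are all correct. The gap lies in how the steps are assembled. After your change of variables the integrand is $|f_z(\bfbet;X)|^{2s}e(-\bfbet\cdot h'(z))$, and the $z$-dependence sits \emph{both} in the phase and in $|f_z(\bfbet;X)|^{2s}$ (the summation range of $f_z$ depends on $z$). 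To reach your ``essential computation'' you must first make the non-phase part of the integrand independent of $z$, and this forces you to insert the kernel representation into \emph{all} $2s$ factors, with independent frequencies $\psi_1,\dots,\psi_{2s}$, keeping the phases $e\bigl(-z(\psi_1+\cdots+\psi_s-\psi_{s+1}-\cdots-\psi_{2s})\bigr)$ intact until the $z$-sum has been performed. Your step with a single $\psi$, namely the pointwise H\"older bound $|f_z(\bfbet;X)|^{2s}\ll(\log X)^{2s-1}\int_0^1|D_X(\psi)|\,|f(\bfbet+(\psi,0,0);2X)|^{2s}\d\psi$, takes absolute values and thereby discards precisely the $e(-z\psi)$ oscillation you later need; moreover, being only an upper bound for a non-negative quantity, it cannot be substituted into the oscillatory sum $\sum_z|f_z(\bfbet;X)|^{2s}e(-\bfbet\cdot h'(z))$ at all. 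Conversely, if you postpone absolute values so as to use the $z$-sum identity, one $\psi$ does not suffice: applying the representation to one factor leaves the other $2s-1$ copies of $f_z$ depending on $z$, and the $z$-sum no longer collapses to $\overline{g(\bfbet,\psi;X)}$. So the argument as written cannot be completed with ``$\Gam=\psi$''.

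The repair is exactly the paper's route: expand every one of the $2s$ factors with its own frequency $\gam_1,\dots,\gam_{2s}$, observe (via the binomial identities you already have) that the shift-dependence of the resulting integrand is purely the phase $e(-\bfalp\cdot h'(y)-y\Gam)$ with $\Gam=\sum_{i\le s}(\gam_i-\gam_{s+i})$, sum over the shift to obtain $g(\bfalp,\Gam;X)$, and only then take absolute values; the inequality $|z_1\cdots z_{2s}|\le|z_1|^{2s}+\cdots+|z_{2s}|^{2s}$ together with a unit change of variable in $\alp_1$ removes the $\gam_i$-shifts from the arguments of $f$, and the $2s$ kernel integrals each contribute a factor $\log X$, giving $(\log X)^{2s}$. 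Two further points. First, since $\Gam$ is independent of $\bfalp$, one takes $\sup_\Gam$ of the whole $\bfalp$-integral; bounding $|g(\bfbet,\psi;X)|$ pointwise by $\sup_\Gam|g(\bfbet,\Gam;X)|$, as your wording suggests, only yields the weaker estimate with $\int\sup$ in place of $\sup\int$, which is not the stated lemma. Second, the domain issue you worry about is simpler than you fear: in (\ref{2.1}) the set $\grB$ restricts $\alp_3$, which is fixed by your shear $\bfalp\mapsto\bfalp^{(z)}$ and untouched by the $\psi$-shift; only $\alp_1$ is translated, and it runs over the full torus, so periodicity disposes of the matter.
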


\begin{proof} For every integral shift $y$ with $1\le y\le X$, one has
\begin{equation}\label{2.3}
f(\bfalp;X)=\sum_{1+y\le x\le X+y}e\left( \alp_3(x-y)^3+\alp_2(x-y)^2+\alp_1(x-y)\right).
\end{equation}
Write
\begin{equation}\label{2.4}
\grf_y(\bfalp;\gam)=\sum_{1\le x\le 2X}e\left( \alp_3(x-y)^3+\alp_2(x-y)^2+(\alp_1+\gam)
(x-y)\right)
\end{equation}
and
\[
K(\gam)=\sum_{1\le z\le X}e(-\gam z).
\]
Then it follows from (\ref{2.3}) via orthogonality that when $1\le y\le X$, one has
\begin{equation}\label{2.5}
f(\bfalp;X)=\int_0^1\grf_y(\bfalp;\gam)K(\gam)\d\gam .
\end{equation}
Next we substitute (\ref{2.5}) into (\ref{2.1}). Define
\begin{equation}\label{2.6}
\grF_y(\bfalp;\bfgam)=\prod_{i=1}^s \grf_y(\bfalp;\gam_i)\grf_y(-\bfalp;-\gam_{s+i}),
\end{equation}
\[
\Ktil(\bfgam)=\prod_{i=1}^sK(\gam_i)K(-\gam_{s+i})
\]
and
\begin{equation}\label{2.7}
\calI(\bfgam;y;\bfh)=\int_\grB \int_0^1\int_0^1 \grF_y(\bfalp;\bfgam)e(-\bfalp \cdot \bfh)
\d\bfalp .
\end{equation}
Then, when $1\le y\le X$, we see that
\begin{equation}\label{2.8}
I_s(\grB;X;\bfh)=\int_{[0,1)^{2s}}\calI(\bfgam ;y;\bfh)\Ktil(\bfgam)\d\bfgam .
\end{equation}

\par By orthogonality, it is apparent from (\ref{2.6}) that
\begin{equation}\label{2.9}
\int_0^1\int_0^1 \grF_y(\bfalp;\bfgam)e(-\bfalp \cdot \bfh)\d\alp_1\d\alp_2 
=\sum_{1\le \bfx\le 2X}\Del(\alp_3,\bfgam;\bfh,y),
\end{equation}
where $\Del(\alp_3,\bfgam;\bfh,y)$ is equal to
\[
e\biggl( \sum_{i=1}^s \left( \alp_3\left( (x_i-y)^3-(x_{s+i}-y)^3\right)+\left( \gam_i(x_i-y)
-\gam_{s+i}(x_{s+i}-y)\right)\right)-\alp_3h_3\biggr) ,
\]
when
\begin{equation}\label{2.10}
\sum_{i=1}^s\left( (x_i-y)^j-(x_{s+i}-y)^j\right)=h_j\quad (j=1,2),
\end{equation}
and otherwise $\Del(\alp_3,\bfgam;\bfh,y)$ is equal to $0$.\par

\par By applying the binomial theorem within (\ref{2.10}), we obtain the relations
\begin{align*}
\sum_{i=1}^s(x_i-x_{s+i})&=h_1,\\
\sum_{i=1}^s(x_i^2-x_{s+i}^2)&=h_2+2yh_1,\\
\sum_{i=1}^s(x_i^3-x_{s+i}^3)&=3yh_2+3y^2h_1+\sum_{i=1}^s\left( (x_i-y)^3-
(x_{s+i}-y)^3\right) .
\end{align*}
Therefore, if we define $\Gam=\Gam(\bfgam)$ by taking
\[
\Gam(\bfgam)=\sum_{i=1}^s(\gam_i-\gam_{s+i}),
\]
and then write
\[
\grg_y(\bfalp;\bfh;\bfgam)=e\biggl( -\sum_{j=1}^3\alp_j\sum_{l=0}^{j-1}\binom{j}{l}
h_{j-l}y^l-y\Gam(\bfgam)\biggr),
\]
then we deduce from (\ref{2.6}) and (\ref{2.9}) that
\begin{equation}\label{2.11}
\int_0^1\int_0^1 \grF_y(\bfalp;\bfgam)e(-\bfalp \cdot \bfh)\d\alp_1\d\alp_2=
\int_0^1\int_0^1\grF_0(\bfalp;\bfgam)\grg_y(\bfalp;\bfh;\bfgam)\d\alp_1\d\alp_2 .
\end{equation}

\par Referring next to (\ref{2.8}), we see that when $X\in \dbN$ one obtains the relation
\[
I_s(\grB;X;\bfh)=X^{-1}\sum_{1\le y\le X}\int_{[0,1)^{2s}}
\calI(\bfgam;y;\bfh)\Ktil(\bfgam)\d\bfgam .
\]
Thus, we infer from (\ref{2.7}) and (\ref{2.11}) that
\begin{equation}\label{2.12}
I_s(\grB;X;\bfh)\ll X^{-1}\int_{[0,1)^{2s}}|H(\bfgam)\Ktil(\bfgam)|\d\bfgam ,
\end{equation}
where
\begin{equation}\label{2.13}
H(\bfgam)=\int_\grB \int_0^1\int_0^1 \grF_0(\bfalp;\bfgam)G(\bfalp;\bfh;\bfgam)\d\bfalp ,
\end{equation}
and
\begin{equation}\label{2.14}
G(\bfalp;\bfh;\bfgam)=\sum_{1\le y\le X}\grg_y(\bfalp;\bfh;\bfgam).
\end{equation}

\par We now aim to simplify the upper bound (\ref{2.12}). Observe first that by reference 
to (\ref{2.6}), it follows from the elementary inequality
\[
|z_1\cdots z_n|\le |z_1|^n+\ldots +|z_n|^n
\]
that
\[
|\grF_0(\bfalp;\bfgam)|\le \sum_{i=1}^{2s}|\grf_0(\bfalp;\gam_i)|^{2s}=
\sum_{i=1}^{2s}|\grf_0(\alp_3,\alp_2,\alp_1+\gam_i;0)|^{2s}.
\]
Also, from (\ref{2.2}) and (\ref{2.14}) we have
\begin{align*}
|G(\bfalp;\bfh;\bfgam)&=\biggl| \sum_{1\le y\le X}e\left( y\Gam(\bfgam)+2h_1y\alp_2+
(3h_2y+3h_1y^2)\alp_3\right) \biggr|\\
&=|g(\bfalp,\Gam(\bfgam);X)|.
\end{align*}
Therefore, since $|G(\bfalp;\bfh;\bfgam)|$ does not depend on $\alp_1$, it follows from 
(\ref{2.13}) via a change of variable that
\begin{equation}\label{2.16}
|H(\bfgam)|\le \int_\grB \int_0^1\int_0^1 |\grf_0(\bfalp;0)^{2s}g(\bfalp,\Gam(\bfgam);X)|
\d\bfalp .
\end{equation}

\par Define
\[
U_s(\grB)=\sup_{\Gam \in [0,1)}\int_\grB \int_0^1\int_0^1 |f(\bfalp;2X)^{2s}
g(\bfalp,\Gam;X)|\d\bfalp .
\]
Also, recall that
\[
\int_0^1|K(\gam)|\d\gam \ll \int_0^1\min\{ X,\|\gam\|^{-1}\}\d\gam \ll \log (2X),
\]
and note from (\ref{1.1}) and (\ref{2.4}) that $\grf_0(\bfalp;0)=f(\bfalp;2X)$. Then we 
deduce from (\ref{2.16}) that $|H(\bfgam)|\le U_s(\grB)$, and hence (\ref{2.12}) yields the 
bound
\begin{align*}
I_s(\grB;X;\bfh)&\ll X^{-1}U_s(\grB)\biggl( \int_0^1|K(\gam)|\d\gam \biggr)^{2s}\\ 
&\ll X^{-1}(\log (2X))^{2s}U_s(\grB).
\end{align*}
This completes the proof of the lemma.
\end{proof}

\section{Further auxiliary mean value estimates} We now prepare mean value estimates of 
use in bounding a minor arc contribution of utility in an application of the Hardy-Littlewood 
method. Recalling the exponential sum $g(\bfalp,\tet;X)$ defined in (\ref{2.2}), and writing 
$g(\bfalp;X)=g(\bfalp,0;X)$, these mixed mean values take the shape
\begin{equation}\label{3.1}
\Tet_m(X;\bfh)=\int_{[0,1)^3}|f(\bfalp;2X)^{2m}g(\bfalp;X)^6|\d\bfalp\quad (m\in\dbN) .
\end{equation}

\begin{lemma}\label{lemma3.1} When $\bfh\in \dbZ^3$ and $h_1\ne 0$, one has 
$\Tet_1(X;\bfh)\ll X^4\log (2X)$.
\end{lemma}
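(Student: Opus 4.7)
The plan is to exploit the fact that $g(\bfalp;X)=g(\bfalp,0;X)$ depends only on $\alp_2$ and $\alp_3$, not on $\alp_1$, so that the $\alp_1$-integration can be carried out at the outset. Expanding $|f(\bfalp;2X)|^2$ via orthogonality, the $\alp_1$-integration isolates the diagonal $x_1=x_2$, whereupon the surviving $\alp_2$- and $\alp_3$-factors reduce to $1$. Hence $\int_0^1|f(\bfalp;2X)|^2\,\d\alp_1=2X$, so that
\[
\Tet_1(X;\bfh)=2X\int_{[0,1)^2}|g(\bfalp;X)|^6\,\d\alp_2\,\d\alp_3.
\]

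Next I would give the remaining double integral an arithmetic interpretation. Expanding $|g|^6$ as a sum over $(\bfy,\bfz)\in [1,X]^{3+3}$ and applying orthogonality in $\alp_2$ and $\alp_3$, one obtains the integer conditions $2h_1\sum_{i=1}^3(y_i-z_i)=0$ and $3h_2\sum_{i=1}^3(y_i-z_i)+3h_1\sum_{i=1}^3(y_i^2-z_i^2)=0$. This is precisely where the assumption $h_1\ne 0$ plays its role: the two conditions collapse to $\sum(y_i-z_i)=\sum(y_i^2-z_i^2)=0$, \emph{independently of $\bfh$}. The double integral is therefore identically the classical Vinogradov mean value $J_{3,2}(X)$ of degree $2$ with three pairs of variables.

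The final step is to invoke the bound $J_{3,2}(X)\ll X^3\log(2X)$, which is the critical exponent case $s=k(k+1)/2=3$ of Vinogradov's mean value theorem for $k=2$. Combining this with the factor $2X$ from the first step then yields the asserted estimate. The main obstacle is securing this sharp logarithmic bound rather than the weaker $X^{3+\eps}$ available from the main conjecture alone. One direct route proceeds via the representation-function expansion $J_{3,2}(X)=\sum_{m,n}r(m,n)^2$, in which $r(m,n)$ counts triples of $[1,X]$ having prescribed first and second power sums: fixing one variable and invoking the discriminant condition for the remaining pair confines them to a Pell-type conic whose lattice points in the relevant box are $O(\log(2X))$ in number, so $r(m,n)\ll\log(2X)$, and combining with the trivial identity $\sum_{m,n}r(m,n)=X^3$ closes the argument.
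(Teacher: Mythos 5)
Your reduction is exactly the paper's: integrating out $\alp_1$ picks out the diagonal in $|f(\bfalp;2X)|^2$ (contributing a factor $\lfloor 2X\rfloor\le 2X$), and then orthogonality in $\alp_2,\alp_3$ turns the remaining sixth-moment of $g$ into the count of solutions of $2h_1\sum(y_i-z_i)=0$, $3h_2\sum(y_i-z_i)+3h_1\sum(y_i^2-z_i^2)=0$, which for $h_1\ne 0$ collapses to the quadratic Vinogradov system, independently of $\bfh$. Up to this point you agree with the paper, which then simply quotes the known bound (indeed the precise asymptotic of Blomer and Br\"udern) $T_0(X)\ll X^3\log(2X)$.

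The gap is in your final step, where you try to prove that bound yourself via $J_{3,2}(X)=\sum_{m,n}r(m,n)^2$ and the pointwise claim $r(m,n)\ll\log(2X)$. That claim is false, and the ``Pell-type conic'' justification does not apply: writing $d=x_2-x_3$ and eliminating, the constraint is $(3x_1-m)^2+3d^2=6n-2m^2$, a \emph{definite} (elliptic) conic, not a hyperbola. The number of lattice points on such a conic is a divisor-type quantity, with maximal order $\exp\bigl(c\log X/\log\log X\bigr)$, which exceeds every fixed power of $\log X$; choosing $m\approx 3X/2$ and the radius small enough that the whole circle lies in $[1,X]^3$, one gets $m,n$ with $r(m,n)\gg(\log X)^{A}$ for any $A$. (Fixing one variable outright, as you also suggest, only gives $r(m,n)\ll X$.) Consequently $\max_{m,n}r(m,n)\cdot\sum_{m,n}r(m,n)$ does not yield $X^3\log X$; the sharp bound requires a genuinely different argument, e.g.\ the substitution $a_i=x_i-y_i$, $b_i=x_i+y_i$ and divisor-sum estimates, or simply the citation the paper uses. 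Note that the divisor bound $r(m,n)\ll X^{\eps}$ would give $\Tet_1(X;\bfh)\ll X^{4+\eps}$, which is all that the later applications (Lemma 3.3) actually use, but it does not prove the lemma as stated.
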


\begin{proof} By orthogonality, one has
\[
\int_0^1|f(\bfalp;2X)|^2\d\alp_1\le 2X.
\]
Then since $g(\bfalp;X)$ is independent of $\alp_1$, we deduce from (\ref{3.1}) that
\begin{equation}\label{3.2}
\Tet_1(X;\bfh)\le 2X\int_{[0,1)^2}|g(0,\alp_2,\alp_3;X)|^6\d\alp_2 \d\alp_3 .
\end{equation}
A second application of orthogonality reveals that the integral on the right hand side here 
counts the number of integral solutions $T_0(X)$ of the system
\begin{align*}
3h_1\sum_{i=1}^3(x_i^2-y_i^2)+3h_2\sum_{i=1}^3(x_i-y_i)&=0,\\
2h_1\sum_{i=1}^3(x_i-y_i)&=0,
\end{align*}
with $1\le x_i,y_i\le X$ $(1\le i\le 3)$. Since, by hypothesis, one has $h_1\ne 0$, we see 
that $T_0(X)$ counts the integral solutions of the Vinogradov system of equations
\[
\sum_{i=1}^3(x_i^j-y_i^j)=0\quad (j=1,2),
\]
with the same conditions on $\bfx$ and $\bfy$. Thus $T_0(X)\ll X^3\log (2X)$ (a precise 
asymptotic formula can be found in \cite{BB2010}), and the conclusion of the lemma 
follows by substituting this upper bound into (\ref{3.2}).
\end{proof}

We next consider mean values in which the multiplicity of the generating functions 
$f(\bfalp;2X)$ is increased by appealing to the Hardy-Littlewood method. With this goal in 
mind, we introduce a Hardy-Littlewood dissection. When $Q$ is a real parameter with 
$1\le Q\le X$, we define the set of major arcs $\grM(Q)$ to be the union of the arcs
\[
\grM(q,a)=\{ \alp\in [0,1):|q\alp -a|\le QX^{-3}\},
\]
with $0\le a\le q\le Q$ and $(a,q)=1$. We then define the complementary set of minor arcs 
$\grm(Q)=[0,1)\setminus \grM(Q)$.\par

We begin with a familiar auxiliary bound for $f(\bfalp;2X)$. In this context, it is useful to 
define the function $\Psi(\alp)$ for $\alp\in [0,1)$ by putting
\[
\Psi(\alp)=(q+X^3|q\alp-a|)^{-1},
\]
when $\alp \in \grM(q,a)\subseteq \grM(X)$, and otherwise by taking $\Psi(\alp)=0$.\par

\begin{lemma}\label{lemma3.2} One has 
$f(\bfalp;2X)^4\ll X^{3+\eps}+X^{4+\eps}\Psi(\alp_3)$.
\end{lemma}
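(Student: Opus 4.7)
The plan is to obtain the bound as a refined Weyl-type estimate for the cubic exponential sum, in which the standard saving $1/q$ is replaced by $1/(q+X^3|q\alp_3-a|)$. For every $\alp_3\in[0,1)$, an application of Dirichlet's theorem yields coprime integers $a,q$ with $1\le q\le X^2$ and $|q\alp_3-a|\le X^{-2}$.

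When $q>X$, one has $\alp_3\notin\grM(X)$ and hence $\Psi(\alp_3)=0$. In this range it suffices to invoke the standard Weyl inequality for the cubic, yielding
\[
|f(\bfalp;2X)|^4\ll X^{4+\eps}(q^{-1}+X^{-1}+qX^{-3}),
\]
and each factor inside the parentheses is at most $X^{-1}$ when $X<q\le X^2$, delivering $|f(\bfalp;2X)|^4\ll X^{3+\eps}$ as required.

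When $q\le X$, so that $\alp_3\in\grM(q,a)\subseteq\grM(X)$, I would execute Weyl differencing twice. Since the second difference $\Delta_{h_2}\Delta_{h_1}P(x)$ of the cubic $P(x)=\alp_1x+\alp_2x^2+\alp_3x^3$ is linear in $x$ with coefficient $6\alp_3h_1h_2$, the contributions from $\alp_1$ and $\alp_2$ drop out of the modulus. Treating the diagonal $h_1h_2=0$ as an $O(X^3)$ error and absorbing the divisor multiplicity of $n=h_1h_2$ into an $X^{\eps}$ factor, one arrives at
\[
|f(\bfalp;2X)|^4\ll X^3+X^{1+\eps}\sum_{1\le n\le 16X^2}\min(X,\|6n\alp_3\|^{-1}).
\]

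It then remains to establish the sharpened estimate
\[
\sum_{1\le n\le N}\min(X,\|n\alp\|^{-1})\ll X^{2+\eps}+\frac{X^{3+\eps}}{q+X^3|q\alp-a|}
\]
for $N=16X^2$ and $\alp=6\alp_3$ (handling the factor of $6$ by passing to a possibly smaller denominator dividing $q$). This is the principal obstacle: it is a refinement of the classical Vaughan-type summation lemma that goes beyond the cruder bound $NX/q+N\log q$, and is obtained by partitioning the range into residue classes modulo $q$ and tracking how the drift $6n(\alp_3-a/q)$ perturbs the fractional parts $\{6na/q\}$; when the drift is large enough to redistribute the values across $[0,1)$, the sum shrinks correspondingly, accounting for the $X^3|q\alp_3-a|$ term in the denominator of $\Psi(\alp_3)$. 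Multiplying the resulting bound by $X^{1+\eps}$ and combining with the initial $X^3$ contribution produces the asserted conclusion.
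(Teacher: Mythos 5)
Your argument is correct in substance but takes a genuinely different route from the paper at the decisive step. The paper simply quotes Weyl's inequality in the form $|f(\bfalp;2X)|\ll X^{1+\eps}(q^{-1}+X^{-1}+qX^{-3})^{1/4}$, valid when $|\alp_3-a/q|\le q^{-2}$, and then invokes the transference principle of \cite[Lemma 14.1]{Woo2015a}, which upgrades this to the same bound with $q$ replaced by $\lam=q+X^3|q\alp_3-a|$ for arbitrary coprime $a$ and $q$; a single application of Dirichlet's theorem then handles the major and minor arc cases simultaneously. You instead re-run Weyl differencing and reduce matters to the sharpened reciprocal-sum estimate $\sum_{n\le 16X^2}\min(X,\|n\alp\|^{-1})\ll X^{2+\eps}+X^{3+\eps}\lam^{-1}$, which you rightly identify as the main point. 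That estimate is true, and your sketched mechanism does work: sorting $n$ into residue classes modulo $q$, only $O(1)$ classes can bring $\|n\alp\|$ below $\asymp 1/q$, since the total drift $n(\alp-a/q)$ is $O(1/q)$ over the range; the remaining classes contribute $\ll X^{2+\eps}$, while each dangerous class is an arithmetic progression with common difference $q|\alp-a/q|$ and so contributes $\ll X+\min\bigl(X^3/q,\,X^{\eps}(q|\alp-a/q|)^{-1}\bigr)\ll X^2+X^{3+\eps}/\lam$. In effect you are re-proving, in this special case, exactly what the cited transference lemma packages, so your road is longer but self-contained, whereas the paper's is shorter but leans on the literature. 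Two minor cautions: the claim that a Dirichlet denominator $q>X$ forces $\Psi(\alp_3)=0$ is not literally correct (in boundary situations $\alp_3$ may lie in some $\grM(q_0,a_0)$ with $q_0\le X$ while also admitting an approximation of denominator near $X^2$), but this is harmless because the bound $X^{3+\eps}$ you obtain there is already majorised by the right-hand side; and in the case $q\le X$ one should observe that the pair with $q\le X$, $(a,q)=1$ and $|q\alp_3-a|\le X^{-2}$ is unique, so the $\lam$ your argument produces really is $\Psi(\alp_3)^{-1}$.
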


\begin{proof} Suppose that $\alp_3\in \dbR$, and $a\in \dbZ$ and $q\in \dbN$ satisfy 
$(a,q)=1$ and $|\alp_3-a/q|\le q^{-2}$. Then from Weyl's inequality (see 
\cite[Lemma 2.4]{Vau1997}), we have
\begin{equation}\label{3.3}
|f(\bfalp;2X)|\ll X^{1+\eps}(q^{-1}+X^{-1}+qX^{-3})^{1/4}. 
\end{equation}
Hence, by a standard transference principle (see \cite[Lemma 14.1]{Woo2015a}), whenever 
$a\in \dbZ$ and $q\in \dbN$ satisfy $(a,q)=1$, one has
\begin{equation}\label{3.4}
|f(\bfalp;2X)|\ll X^{1+\eps}(\lam^{-1}+X^{-1}+\lam X^{-3})^{1/4},
\end{equation}
where $\lam=q+X^3|q\alp_3-a|$.\par

When $\alp_3\in [0,1)$, an application of Dirichlet's 
approximation theorem shows that there exist $a\in \dbZ$ and $q\in \dbN$ with 
$0\le a\le q\le X^2$, $(a,q)=1$ and $|q\alp_3-a|\le X^{-2}$. Thus 
$\lam=q+X^3|q\alp_3-a|\ll X^2$. Note that when $\alp_3\in \grm(X)$ we must have 
$\lam\ge q>X$, whilst for $\alp_3\in \grM(X)$ one has $\lam^{-1}=\Psi(\alp_3)$. Then in 
any case we find from (\ref{3.4}) that
\[
|f(\bfalp;2X)|^4\ll X^{3+\eps}+X^{4+\eps}\lam^{-1}\ll X^{3+\eps}
+X^{4+\eps}\Psi(\alp_3),
\]
and the conclusion of the lemma follows.
\end{proof}

\begin{lemma}\label{lemma3.3}
Suppose that $\bfh\in \dbZ^3$ and $h_1\ne 0$. Then one has
\[
\Tet_3(X;\bfh)\ll X^{7+\eps}\quad \text{and}\quad \Tet_5(X;\bfh)\ll X^{10+\eps}.
\]
\end{lemma}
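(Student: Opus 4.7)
The plan is to prove both bounds by iterating Lemma \ref{lemma3.2} to reduce powers of $|f(\bfalp;2X)|^4$ in the integrand, combined with orthogonality in $\alp_1$ and Fourier analysis of the Hardy--Littlewood kernel $\Psi(\alp_3)$. For $\Tet_3(X;\bfh)$, I would first write $|f|^6=|f|^4\cdot|f|^2$ and apply Lemma \ref{lemma3.2} to obtain
\[
\Tet_3(X;\bfh)\ll X^{3+\eps}\Tet_1(X;\bfh)+X^{4+\eps}\int\Psi(\alp_3)|f(\bfalp;2X)|^2|g(\bfalp;X)|^6\d\bfalp.
\]
The first term is controlled by Lemma \ref{lemma3.1}, contributing $\ll X^{7+\eps}$. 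For the second, since $g(\bfalp;X)$ is independent of $\alp_1$, orthogonality yields $\int_0^1|f|^2\d\alp_1=2X$, so the integral reduces to $2X\int_0^1\Psi(\alp_3)T(\alp_3)\d\alp_3$, where $T(\alp_3):=\int_0^1|g(\bfalp;X)|^6\d\alp_2$.

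The key step is then a Fourier expansion. Writing $T(\alp_3)=\sum_m N(m)e(3h_1 m\alp_3)$, one sees (using $h_1\ne 0$) that $N(m)$ counts $\bfy\in[1,X]^6$ with $\sum_{i\le 3}(y_i-y_{i+3})=0$ and $\sum_{i\le 3}(y_i^2-y_{i+3}^2)=m$, whence $\sum_m N(m)\ll X^5$. A Ramanujan sum computation produces the Fourier bounds $|\widehat\Psi(\ell)|\ll X^{-3+\eps}$ for $\ell\ne 0$ and $\widehat\Psi(0)=\int\Psi\ll X^{-2+\eps}$; separating $m=0$ from $m\ne 0$ and invoking $N(0)\ll X^{3+\eps}$ (Vinogradov's mean value theorem for degree $2$), I would deduce $\int_0^1\Psi T\,\d\alp_3\ll X^{2+\eps}$. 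This gives the second term $\ll X^{4+\eps}\cdot 2X\cdot X^{2+\eps}\ll X^{7+\eps}$, completing the bound on $\Tet_3$.

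For $\Tet_5(X;\bfh)$ the strategy adds one level of iteration. Splitting $[0,1)=\grM(X)\cup\grm(X)$ at $\alp_3$, on $\grm(X)$ Lemma \ref{lemma3.2} yields $|f|^4\ll X^{3+\eps}$, whence the minor arc piece is $\ll X^{3+\eps}\Tet_3\ll X^{10+\eps}$, using the bound just established. On $\grM(X)$ the sharper bound $|f|^4\ll X^{4+\eps}\Psi$ applies; writing $|f|^{10}=(|f|^4)^2|f|^2$ and integrating in $\alp_1$ first reduces matters to integrals of the form $2X\int_0^1\Psi^k(\alp_3)T(\alp_3)\,\d\alp_3$ for $k=1,2$, treated by the same Fourier/Ramanujan-sum method (noting $|\widehat{\Psi^2}(\ell)|\ll X^{-3+\eps}$ uniformly in $\ell$).

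The trickier step for $\Tet_5$ is making this major arc estimate sharp enough to recover $X^{10+\eps}$ rather than the weaker $X^{11+\eps}$ produced by a purely naive iteration: the principal arc $\alp_3\approx 0$, where both $\Psi^2$ and $T$ attain near-maximal size, is the place where one must economise. I anticipate that the requisite saving comes from combining the pointwise bound $T(\alp_3)\ll X^5/q^2$ on $\grM(q,a)$ (extracted via the Weyl inequality for the degree-$2$ sum $g$, using a Gauss sum decomposition and the identity $\int_0^1|g|^2\d\alp_2=X$) with a careful treatment of the mismatch between the widths of $\Psi^2$ and $T$ near the origin.
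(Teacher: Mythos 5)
Your bound for $\Tet_3(X;\bfh)$ is correct and is in substance the paper's own argument: after the single application of Lemma \ref{lemma3.2}, the paper disposes of the $\Psi$-weighted term by citing Br\"udern's pruning lemma \cite[Lemma 2]{Bru1988}, and your Ramanujan-sum computation ($\widehat{\Psi}(0)\ll X^{-2+\eps}$, $\widehat{\Psi}(\ell)\ll X^{-3+\eps}$ for $\ell\ne 0$, combined with $N(0)\ll X^{3+\eps}$ and $\sum_m N(m)\ll X^5$) is precisely an inline proof of that lemma in this special case; the paper phrases the same dichotomy as $T_1\ll X^{\eps-3}(X\Tet_1+T_2)$ with $T_2\ll X^6$.

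The treatment of $\Tet_5(X;\bfh)$, however, has a genuine gap, and the difficulty you flag at the end is fatal to the decomposition you chose rather than a technical point to be finessed. Writing $|f|^{10}=(|f|^4)^2|f|^2$ and replacing $|f|^8$ by $X^{8+\eps}\Psi(\alp_3)^2$ on $[0,1)^2\times\grM(X)$ commits you to proving $\int_0^1\Psi(\alp_3)^2T(\alp_3)\,\d\alp_3\ll X^{1+\eps}$, and this is false. On the central arc one has $\Psi(\alp_3)^2\gg 1$ for $|\alp_3|\le cX^{-3}$, while there $T(\alp_3)=\sum_m N(m)e(3h_1m\alp_3)\gg \sum_m N(m)\gg X^5$, because the frequencies satisfy $|3h_1m|\ll X^2$ and so the phases are $1+O(X^{-1})$; since $T\ge 0$ everywhere, this alone gives $\int_0^1\Psi^2T\,\d\alp_3\gg X^{-3}\cdot X^5=X^2$, so your major-arc term is of genuine size $X^{11}$. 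Your proposed rescue (the pointwise bound $T(\alp_3)\ll X^5/q^2$ together with width considerations) cannot help, because the dominant contribution comes from $q=1$, where there is no saving in $q$ and the widths work against you: $\Psi^2$ concentrates on an interval of length $\asymp X^{-3}$ sitting inside the interval of length $\asymp X^{-2}$ on which $T\asymp X^5$. The loss is incurred the moment $|f|^8$ is replaced by its major-arc supremum: for generic $(\alp_1,\alp_2)$ the sum $f$ is nowhere near size $X$ even when $\alp_3$ is central. The repair is to apply Lemma \ref{lemma3.2} only once, writing $|f|^{10}=|f|^4\cdot|f|^6$, which is what the paper does: one is then left with $\int_0^1\Psi(\alp_3)G(\alp_3)\,\d\alp_3$, where $G(\alp_3)=\int_{[0,1)^2}|f(\bfalp;2X)^6g(\bfalp;X)^6|\,\d\alp_1\d\alp_2$ has nonnegative Fourier coefficients in $\alp_3$, total coefficient mass $G(0)\le X^6T_0(2X)\ll X^{9+\eps}$ (trivial bound on $g$ plus the quadratic Vinogradov count) and zeroth coefficient $\int_0^1G=\Tet_3\ll X^{7+\eps}$. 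Your own Fourier argument (equivalently \cite[Lemma 2]{Bru1988}) then gives $\int_0^1\Psi G\,\d\alp_3\ll X^{-2+\eps}\Tet_3+X^{-3+\eps}G(0)\ll X^{6+\eps}$, whence $\Tet_5\ll X^{3+\eps}\Tet_3+X^{4+\eps}\cdot X^{6+\eps}\ll X^{10+\eps}$ as required.
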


\begin{proof} By applying Lemma \ref{lemma3.2} to (\ref{3.1}), one obtains
\begin{equation}\label{3.5}
\Tet_3(X;\bfh)\ll X^{3+\eps}\Tet_1(X;\bfh)+X^{4+\eps}T_1,
\end{equation}
where
\[
T_1=\int_{[0,1)^3}\Psi(\alp_3)|f(\bfalp;2X)^2g(\bfalp;X)^6|\d\bfalp .
\]
Moreover, as a consequence of \cite[Lemma 2]{Bru1988}, we have
\begin{equation}\label{3.6}
T_1\ll X^{\eps-3}(X\Tet_1(X;\bfh)+T_2),
\end{equation}
where
\[
T_2=\int_{[0,1)^2}|f(\alp_1,\alp_2,0;2X)^2g(\alp_1,\alp_2,0;X)^6|\d\alp_1\d\alp_2 .
\]
By orthogonality, the mean value $T_2$ counts the integral solutions of the simultaneous 
equations
\begin{align*}
x_1^2-x_2^2&=2h_1(y_1+y_2+y_3-y_4-y_5-y_6),\\
x_1-x_2&=0,
\end{align*}
with $1\le x_1,x_2\le 2X$ and $1\le y_i\le X$ $(1\le i\le 6)$. Plainly, in any such solution one 
has $x_1=x_2$, and so there are at most $O(X)$ possible choices for $x_1$ and $x_2$. 
Meanwhile, given $y_1,\ldots ,y_5$, the variable $y_6$ is determined uniquely from the 
first of these equations, so there are $O(X^5)$ possible choices for $\bfy$. We therefore 
see that $T_2=O(X^6)$, and hence (\ref{3.6}) delivers the bound
\[
T_1\ll X^{\eps-2}\Tet_1(X;\bfh)+X^{3+\eps}.
\]
The first bound of the lemma follows by substituting this estimate into (\ref{3.5}), noting 
the bound $\Tet_1(X;\bfh)\ll X^{4+\eps}$ available from Lemma \ref{lemma3.1}.\par

The second bound of the lemma is obtained by applying Lemma \ref{lemma3.2} to 
(\ref{3.1}) again, yielding
\begin{equation}\label{3.7}
\Tet_5(X;\bfh)\ll X^{3+\eps}\Tet_3(X;\bfh)+X^{4+\eps}T_3,
\end{equation}
where
\[
T_3=\int_{[0,1)^3}\Psi(\alp_3)|f(\bfalp;2X)^6g(\bfalp ;X)^6|\d\bfalp .
\]
Again utilising \cite[Lemma 2]{Bru1988}, we deduce that
\begin{equation}\label{3.8}
T_3\ll X^{\eps-3}(X\Tet_3(X;\bfh)+T_4),
\end{equation}
where
\[
T_4=\int_{[0,1)^2}|f(\alp_1,\alp_2,0;2X)^6g(\alp_2,\alp_2,0;X)^6|\d \alp_1\d\alp_2 .
\]
By applying the trivial estimate $|g(\alp_1,\alp_2,0;X)|=O(X)$, we see that
\[
T_4\ll X^6\int_{[0,1)^2}|f(\alp_1,\alp_2,0;2X)|^6\d\alp_1\d\alp_2\ll X^6\cdot X^{3+\eps}.
\]
Here, applying orthogonality, we recognised that the last integral is equal to our 
acquaintance $T_0(2X)$ introduced in the proof of Lemma \ref{lemma3.1}, and shown 
therein to be $O(X^{3+\eps})$. We thus deduce from (\ref{3.8}) that
\[
T_3\ll X^{\eps-2}\Tet_3(X;\bfh)+X^{6+\eps}.
\]
The second bound of the lemma follows by substituting this estimate into (\ref{3.7}), noting 
the first bound $\Tet_3(X;\bfh)\ll X^{7+\eps}$ already obtained.
\end{proof}

We convert the second bound of Lemma \ref{lemma3.3} into one suitable for later use. In 
this context, it is convenient to introduce the mean value 
\[
V(\Gam;X;\bfh)=\int_{[0,1)^3}|f(\bfalp;2X)^{10}g(\bfalp,\Gam;X)^6|\d\bfalp .
\]

\begin{lemma}\label{lemma3.4}
Suppose that $\bfh\in \dbZ^3$ and $h_1\ne 0$. Then one has
\[
\sup_{\Gam\in [0,1)}V(\Gam;X;\bfh)\ll X^{10+\eps}.
\]
\end{lemma}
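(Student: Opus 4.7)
The plan is to retrace the proof of Lemma \ref{lemma3.3}, replacing $g(\bfalp;X)$ by $g(\bfalp,\Gam;X)$ throughout, and to verify that every intermediate estimate holds with constants independent of $\Gam$. The generating function $g(\bfalp,\Gam;X)$ differs from $g(\bfalp;X)$ only by the extra linear phase $e(y\Gam)$ in each summand, and the trivial bound $|g(\bfalp,\Gam;X)|\le X$ is uniform in $\Gam$. Consequently, the structural estimates provided by Lemma \ref{lemma3.2} and by Br\"udern's lemma \cite[Lemma 2]{Bru1988} transfer verbatim, with $g(\bfalp,\Gam;X)$ in place of $g(\bfalp;X)$ throughout.

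Concretely, I would apply Lemma \ref{lemma3.2} to four of the ten copies of $f(\bfalp;2X)$ in $V(\Gam;X;\bfh)$, mirroring (\ref{3.7}), and then invoke Br\"udern's lemma on the $\Psi(\alp_3)$-weighted piece, mirroring (\ref{3.8}). Iterating the same pair of manoeuvres one level deeper reduces the bound $V(\Gam;X;\bfh)\ll X^{10+\eps}$ to uniform-in-$\Gam$ analogues of the three estimates $\Tet_1(X;\bfh)\ll X^4\log(2X)$, $T_2\ll X^6$, and $T_4\ll X^{9+\eps}$ used in the proofs of Lemmas \ref{lemma3.1} and \ref{lemma3.3}. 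The analogue of the $T_4$ bound is immediate, since its derivation relies only on the trivial estimate $|g|\le X$ together with orthogonality in $f$, neither of which involves $\Gam$.

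The heart of the matter is thus to verify the $\Gam$-twisted analogues of $\Tet_1(X;\bfh)$ and $T_2$. For the former, I would follow the proof of Lemma \ref{lemma3.1}: integrating $|f(\bfalp;2X)|^2$ over $\alp_1$ yields the bound $2X$, and expanding $|g(0,\alp_2,\alp_3,\Gam;X)|^6$ and integrating over $\alp_2$ and $\alp_3$ produces the same Vinogradov count as in the untwisted case, weighted by the extra phase $e\bigl(\Gam\sum_{i=1}^3(x_i-y_i)\bigr)$. The key observation is that the linear equation $2h_1\sum_{i=1}^3(x_i-y_i)=0$ forced by orthogonality, combined with the hypothesis $h_1\ne 0$, yields $\sum_{i=1}^3(x_i-y_i)=0$; hence the extra phase equals $1$ on the solution set and the count is unchanged. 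The same mechanism handles the analogue of $T_2$: orthogonality forces $x_1=x_2$ and $x_1^2-x_2^2=2h_1\sum_{i=1}^3(y_i-y_{i+3})$, so $\sum_{i=1}^3(y_i-y_{i+3})=0$, and the $\Gam$-dependent phase again collapses to $1$. Assembling the pieces in the same sequence as in Lemma \ref{lemma3.3} then delivers the desired bound $V(\Gam;X;\bfh)\ll X^{10+\eps}$ uniformly for $\Gam\in[0,1)$. The only substantive point beyond Lemma \ref{lemma3.3} is thus the annihilation of the linear $\Gam$-twist on the Vinogradov-type solution sets carved out by the $h_1\ne 0$ hypothesis.
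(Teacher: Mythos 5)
There is a genuine gap, and it sits precisely at your second application of Br\"udern's lemma. The estimate of \cite[Lemma 2]{Bru1988} is not a purely structural bound that transfers automatically: in the form used in the paper it applies to a function of $\alp_3$ whose Fourier coefficients are nonnegative (it is false for arbitrary nonnegative integrable functions), and in the untwisted argument this hypothesis holds because, after integrating over $\alp_1$ and $\alp_2$, those coefficients are counts of solutions of a Diophantine system. In your twisted analogue of $T_1$ the hypothesis does survive, for exactly the reason you identify: the constraints $x_1=x_2$ and $x_1^2-x_2^2+2h_1(u_1+u_2+u_3-v_1-v_2-v_3)=0$, with $h_1\ne 0$, force the linear form multiplying $\Gam$ to vanish, so the twist collapses and the coefficients remain nonnegative counts. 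But in the twisted analogue of $T_3$, where six copies of $f$ are present, the $\alp_2$-constraint reads $\sum_{i\le 3}(x_i^2-x_{i+3}^2)+2h_1(u_1+u_2+u_3-v_1-v_2-v_3)=0$, and this no longer forces $u_1+u_2+u_3-v_1-v_2-v_3=0$. The twist therefore does not collapse there, the relevant Fourier coefficients become exponential sums in $\Gam$ over the solution sets rather than nonnegative counts, and Br\"udern's lemma does not transfer ``verbatim''. Your ``only substantive point'', the annihilation of the $\Gam$-twist via $h_1\ne 0$, is exactly the point that fails at this stage, so the deduction of the twisted $\Tet_5$ bound is unjustified as written.

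The repair is the observation that, solution by solution (equivalently, coefficient by coefficient), the twisted quantity is majorized in absolute value by the untwisted one, because the twist is a unimodular weight depending only on the $u,v$ variables. Once this is in hand, you should apply it at the top level and dispense with the retracing altogether: by orthogonality, $V(\Gam;X;\bfh)$ counts the solutions of the system underlying $\Tet_5(X;\bfh)$, each weighted by $e(-\Gam(u_1+u_2+u_3-v_1-v_2-v_3))$, whence the triangle inequality gives $\sup_{\Gam\in[0,1)}V(\Gam;X;\bfh)\le V(0;X;\bfh)=\Tet_5(X;\bfh)\ll X^{10+\eps}$ by Lemma \ref{lemma3.3}. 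This short majorization argument is the paper's proof; it requires no twist annihilation, no uniformity check, and no re-examination of Lemmata \ref{lemma3.1}--\ref{lemma3.3}, and it is also the missing ingredient needed to make your Br\"udern step legitimate.
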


\begin{proof} By orthogonality, the mean value $V(\Gam;X;\bfh)$ counts the integral 
solutions of the system
\begin{align*}
\sum_{i=1}^5(x_i^3-y_i^3)&=3h_2\sum_{j=1}^3(u_j-v_j)+3h_1\sum_{j=1}^3
(u_j^2-v_j^2)\\
\sum_{i=1}^5(x_i^2-y_i^2)&=2h_1\sum_{j=1}^3(u_j-v_j)\\
\sum_{i=1}^5(x_i-y_i)&=0,
\end{align*}
with $1\le x_i,y_i\le 2X$ $(1\le i\le 5)$ and $1\le u_j,v_j\le X$ $(1\le j\le 3)$, and with 
each solution $\bfx,\bfy,\bfu,\bfv$ being counted with weight
\[
e(-\Gam (u_1+u_2+u_3-v_1-v_2-v_3)).
\]
Since the latter weight is unimodular, we obtain an upper bound for $V(\Gam;X;\bfh)$ by 
replacing that weight with $1$, or equivalently, by setting $\Gam$ to be $0$. Thus, by 
reference to (\ref{3.1}), we conclude that
\[
\sup_{\Gam\in [0,1)}V(\Gam;X;\bfh)\le V(0;X;\bfh)=\Tet_5(X;\bfh).
\]
The conclusion of the lemma is therefore immediate from Lemma \ref{lemma3.3}.
\end{proof}

\section{A first minor arc bound} Before announcing our first estimate of minor arc type, we 
recall a standard consequence of Weyl's inequality. Recall the set of minor arcs $\grm(Q)$ 
defined in the preamble to Lemma \ref{lemma3.2}, and suppose that $\alp_3\in \grm(Q)$. 
By Dirichlet's approximation theorem, there exist $a\in \dbZ$ and $q\in \dbN$ with 
$0\le a\le q\le Q^{-1}X^3$, $(a,q)=1$ and $|q\alp_3-a|\le QX^{-3}$. Since 
$\alp_3\in \grm(Q)$ one has $q>Q$, and thus we deduce from Weyl's inequality (\ref{3.3}) 
that
\begin{equation}\label{4.1}
\sup_{\alp_3\in \grm(Q)}\sup_{(\alp_1,\alp_2)\in [0,1)^2}|f(\bfalp;2X)|\ll 
X^{1+\eps}Q^{-1/4}.
\end{equation}

\begin{lemma}\label{lemma4.1} Let $s$ be a natural number with $s\ge 6$, and put 
$\del=2s-35/3$. Then whenever $\bfh\in \dbZ^3$ and $h_1\ne 0$, one has
\[
I_s(\grm(Q);X;\bfh)\ll X^{2s-6+\eps}Q^{-\del/4}.
\]
\end{lemma}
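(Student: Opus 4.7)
First I would apply Lemma~\ref{lemma2.1}, which reduces matters (up to a factor of $X^{-1}(\log X)^{2s}$) to bounding
\[
J(\Gam)=\int_{\grm(Q)}\int_0^1\int_0^1 |f(\bfalp;2X)|^{2s}|g(\bfalp,\Gam;X)|\,\d\bfalp
\]
uniformly for $\Gam\in[0,1)$. The overall plan is to extract a power of $|f(\bfalp;2X)|$ from the integrand using the minor arc Weyl bound (\ref{4.1}), and then dispose of the residual integral by H\"older's inequality in such a way that the two H\"older factors match, respectively, the mean value bound of Lemma~\ref{lemma3.4} and the cubic case $\int_{[0,1)^3}|f(\bfalp;2X)|^{12}\,\d\bfalp \ll X^{6+\eps}$ of Vinogradov's main conjecture.

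With $\del = 2s-35/3$, the bound (\ref{4.1}) gives $|f(\bfalp;2X)|\ll X^{1+\eps}Q^{-1/4}$ pointwise for $\alp_3\in\grm(Q)$, uniformly in $\alp_1,\alp_2$. I would use this to extract the factor $|f(\bfalp;2X)|^{\del}$ at cost $X^{\del+\eps}Q^{-\del/4}$, and then extend the $\alp_3$-integration in the residual to $[0,1)$, reducing the problem to estimating
\[
\int_{[0,1)^3}|f(\bfalp;2X)|^{35/3}|g(\bfalp,\Gam;X)|\,\d\bfalp.
\]
The particular value $2s-\del = 35/3$ is chosen precisely so as to enable the next step.

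H\"older's inequality with exponents $6$ and $6/5$, applied to the pointwise identity $|f|^{35/3}|g|=\bigl(|f|^{10}|g|^6\bigr)^{1/6}\cdot |f|^{10}$, then yields
\[
\int_{[0,1)^3}|f|^{35/3}|g|\,\d\bfalp \le \Bigl(\int|f|^{10}|g|^6\,\d\bfalp\Bigr)^{1/6}\Bigl(\int|f|^{12}\,\d\bfalp\Bigr)^{5/6}\ll (X^{10+\eps})^{1/6}(X^{6+\eps})^{5/6}=X^{20/3+\eps},
\]
where the first factor is estimated by Lemma~\ref{lemma3.4} (this is the point at which the hypothesis $h_1\ne 0$ is actually invoked) and the second by the resolution \cite{Woo2016} of the cubic case of Vinogradov's main conjecture.

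Combining the $X^{-1+\eps}$ prefactor from Lemma~\ref{lemma2.1}, the extraction cost $X^{\del+\eps}Q^{-\del/4}$, and the residual bound $X^{20/3+\eps}$ produces total $X$-exponent $-1+\del+20/3 = 2s-6$, delivering $I_s(\grm(Q);X;\bfh)\ll X^{2s-6+\eps}Q^{-\del/4}$, as required. The only truly delicate point is the choice $\del=2s-35/3$: it is uniquely dictated by requiring that the two H\"older pieces align exactly with the two available mean value inputs --- Lemma~\ref{lemma3.4} for $\int|f|^{10}|g|^6$ and the cubic Vinogradov bound for $\int|f|^{12}$ --- and any deviation from this value would sacrifice either the Weyl savings or one of the two mean value estimates. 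Everything else is arithmetic bookkeeping.
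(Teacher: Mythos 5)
Your proposal is correct and follows essentially the same route as the paper: after the reduction via Lemma~\ref{lemma2.1}, one extracts $|f(\bfalp;2X)|^{2s-35/3}$ by the minor arc Weyl bound (\ref{4.1}) and applies H\"older with exponents $6$ and $6/5$ so that the two factors are exactly $U_2=\int|f^{10}g^6|$ (Lemma~\ref{lemma3.4}, where $h_1\ne 0$ enters) and $U_1=\int|f|^{12}$ (the cubic Vinogradov bound), which is precisely the paper's argument. The only cosmetic difference is that the paper keeps the supremum factor inside a single H\"older step on $W_s(\Gam;X;\bfh)$ rather than extracting it pointwise first; the bookkeeping and the resulting exponent $2s-6$ are identical.
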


\begin{proof} We find from Lemma \ref{lemma2.1} that
\begin{equation}\label{4.2}
I_s(\grm(Q);X;\bfh)\ll X^{\eps-1}\sup_{\Gam\in [0,1)}W_s(\Gam;X;\bfh),
\end{equation}
where
\[
W_s(\Gam;X;\bfh)=\int_{\grm(Q)}\int_0^1\int_0^1|f(\bfalp;2X)^{2s}g(\bfalp,\Gam;X)|
\d\bfalp .
\]
An application of H\"older's inequality reveals that whenever $s\ge 6$, one has
\begin{equation}\label{4.3}
W_s(\Gam;X;\bfh)\le \biggl( \sup_{\alp_3\in \grm(Q)}\sup_{(\alp_1,\alp_2)\in [0,1)^2}|
f(\bfalp;2X)|\biggr)^{2s-35/3}U_1^{5/6}U_2^{1/6},
\end{equation}
in which we write
\[
U_1=\int_{[0,1)^3}|f(\bfalp;2X)|^{12}\d\bfalp \quad \text{and}\quad 
U_2=\int_{[0,1)^3}|f(\bfalp;2X)^{10}g(\bfalp,\Gam;X)^6|\d\bfalp.
\]
The cubic case of the main conjecture in Vinogradov's mean value theorem established by 
the author \cite{Woo2016} shows that $U_1\ll X^{6+\eps}$. Meanwhile, the bound 
$U_2\ll X^{10+\eps}$ is confirmed in Lemma \ref{lemma3.4}. By substituting these bounds 
together with (\ref{4.1}) into (\ref{4.3}), we obtain the estimate
\[
W_s(\Gam;X;\bfh)\ll X^\eps \left( XQ^{-1/4}\right)^{2s-35/3}\left( X^6\right)^{5/6}
\left( X^{10}\right)^{1/6}\ll X^{2s-5+\eps}Q^{-\del/4}.
\]
The conclusion of the lemma follows by substituting this bound into (\ref{4.2}).
\end{proof}

\section{The Hardy-Littlewood dissection} Our application of the Hardy-Littlewood method 
follows the strategy pursued in our recent work on the Hilbert-Kamke problem (see 
\cite{Woo2021}), though equipped in this instance with the minor arc estimate prepared in 
\S4. We begin our discussion by introducing a close relative of the mean value 
$I_s(\grB;X;\bfh)$ introduced in (\ref{2.1}). Thus, when $\grA\subseteq [0,1)^3$ is 
measurable, we define the mean value $T_s(\grA;\bfh)=T_s(\grA;X;\bfh)$ by putting
\begin{equation}\label{5.1}
T_s(\grA;X;\bfh)=\int_\grA |f(\bfalp;X)|^{2s}e(-\bfalp \cdot \bfh)\d\bfalp .
\end{equation}

\par We require an appropriate Hardy-Littlewood dissection of the unit cube $[0,1)^3$ into 
major and minor arcs. When $Z$ is a real parameter with $1\le Z\le X$, we define the set 
of major arcs $\grK(Z)$ to be the union of the arcs
\[
\grK(q,\bfa;Z)=\{\bfalp \in [0,1)^3:\text{$|\alp_j-a_j/q|\le ZX^{-j}$ $(1\le j\le 3)$}\},
\]
with $1\le q\le Z$, $0\le a_j\le q$ $(1\le j\le 3)$ and $(q,a_1,a_2,a_3)=1$. We then define 
the complementary set of minor arcs $\grk(Z)=[0,1)^3\setminus \grK(Z)$.\par

We have already defined the one-dimensional Hardy-Littlewood dissection of $[0,1)$ into the 
sets of arcs $\grM(Q)$ and $\grm(Q)$. We now fix $L=X^{1/72}$ and $Q=L^3$, and we 
define intermediate sets of $3$-dimensional arcs $\grN=\grK(Q^2)$ and $\grn=\grk(Q^2)$. 
We also need a narrow set of major arcs $\grP=\grK(L)$ and a corresponding set of minor 
arcs $\grp=\grk(L)$. It is convenient, in this context, to write $\grP(q,\bfa)=\grK(q,\bfa;L)$. 
As is easily verified, one has $\grP\subseteq [0,1)^2\times\grM$. Hence, the set of points 
$(\alp_1,\alp_2,\alp_3)$ lying in $[0,1)^3$ may be partitioned into the four disjoint subsets
\begin{align*}
\grW_1&=[0,1)^2\times \grm,\\
\grW_2&=\left( [0,1)^2\times \grM\right)\cap \grn,\\
\grW_3&=\left( [0,1)^2\times \grM\right) \cap (\grN\setminus \grP),\\
\grW_4&=\grP.
\end{align*}
Thus, on comparing (\ref{1.2}) and (\ref{5.1}), we find that
\begin{equation}\label{5.2}
B_s(X;\bfh)=T_s([0,1)^3;X;\bfh)=\sum_{i=1}^4T_s(\grW_i;X;\bfh).
\end{equation}

\par Our work in \S\S2, 3 and 4 bounds $T_s(\grW_1;\bfh)$.

\begin{lemma}\label{lemma5.1} Let $s$ be a positive integer with $s\ge 6$, and put 
$\del=2s-35/3$. Then, whenever $\bfh\in \dbZ^3$ and $h_1\ne 0$, one has
\[
T_s(\grW_1;\bfh)\ll X^{2s-6-\del/100}.
\]
\end{lemma}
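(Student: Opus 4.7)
My plan is to recognise that $T_s(\grW_1;\bfh)$ is literally the same quantity as $I_s(\grm(Q);X;\bfh)$, with the one-dimensional minor arc set $\grm=\grm(Q)$ attached to $Q=L^3=X^{1/24}$, and then invoke Lemma \ref{lemma4.1} directly.

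The first step is to compare the definitions (\ref{2.1}) and (\ref{5.1}). The set $\grW_1=[0,1)^2\times\grm$ is a product whose third factor is the one-dimensional minor arc set $\grm(Q)$, and the $\alp_3$-variable is the outer integration variable in the definition of $I_s(\cdot;X;\bfh)$. Hence by Fubini's theorem one has
\[
T_s(\grW_1;\bfh)=I_s(\grm(Q);X;\bfh).
\]

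Next I apply Lemma \ref{lemma4.1} with this value of $Q$, obtaining
\[
|T_s(\grW_1;\bfh)|\ll X^{2s-6+\eps}Q^{-\del/4}=X^{2s-6+\eps-\del/96}.
\]
To absorb the $X^\eps$ factor, I observe that $s\ge 6$ forces $\del=2s-35/3\ge 1/3$, so that $\del/96-\del/100=\del/2400$ is a fixed positive quantity. Therefore, for all sufficiently small $\eps>0$, one has $X^{\eps-\del/96}\ll X^{-\del/100}$, which yields the claimed bound $T_s(\grW_1;\bfh)\ll X^{2s-6-\del/100}$.

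I do not anticipate any real obstacle in this particular lemma. The substantive work has already been carried out in Sections 2--4: the shift argument of Lemma \ref{lemma2.1} reduces the relevant integral to a mixed mean value; Lemma \ref{lemma3.4} furnishes the bound for $V(\Gam;X;\bfh)$ going beyond square-root cancellation, crucially exploiting the hypothesis $h_1\ne 0$ via Lemma \ref{lemma3.1}; and Lemma \ref{lemma4.1} combines these with H\"older's inequality, Weyl's inequality, and the cubic case of Vinogradov's main conjecture to deliver the desired minor arc saving. Lemma \ref{lemma5.1} merely records that estimate in the notation of the three-dimensional Hardy-Littlewood dissection introduced at the start of Section 5, with the only mild book-keeping being to confirm that $\del/96$ is strictly larger than $\del/100$ so that the $X^\eps$ loss can be absorbed.
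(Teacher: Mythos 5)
Your proposal is correct and follows exactly the paper's own argument: identify $T_s(\grW_1;\bfh)$ with $I_s(\grm(Q);X;\bfh)$ for $Q=L^3=X^{1/24}$, substitute into Lemma \ref{lemma4.1} to get the bound $X^{2s-6+\eps}\cdot X^{-\del/96}$, and absorb the $X^\eps$ using $\del\ge 1/3$. The only difference is that you spell out the Fubini identification and the $\del/96>\del/100$ book-keeping, which the paper leaves implicit.
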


\begin{proof} By substituting $Q=X^{1/24}$ into Lemma \ref{lemma4.1}, we find that
\[
T_s(\grW_1;\bfh)=I_s(\grm(Q);X;\bfh)\ll X^{2s-6+\eps}\cdot X^{-\del/96},
\]
and the conclusion of the lemma follows at once.
\end{proof}

\section{Further minor arc estimates} Our analysis of the sets of arcs $\grW_2$ and 
$\grW_3$ within (\ref{5.2}) involves standard tools from the theory of Vinogradov's mean 
value theorem. We begin by recording an estimate of Weyl type for the exponential sum 
$f(\bfalp;X)$.

\begin{lemma}\label{lemma6.1} One has
\[
\sup_{\bfalp \in \grn}|f(\bfalp;X)|\ll X^{1-1/54}\quad \text{and}\quad 
\sup_{\bfalp\in \grp}|f(\bfalp;X)|\ll X^{1-1/324}.
\]
\end{lemma}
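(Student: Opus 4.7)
The plan is to establish both bounds as instances of a uniform Weyl-type estimate on the three-dimensional minor arcs of the shape
\[
\sup_{\bfalp\in\grk(Z)}|f(\bfalp;X)|\ll X^{1+\eps}Z^{-2/9},
\]
valid for $1\le Z\le X^{1/3}$. Setting $Z=Q^2=L^6=X^{1/12}$ and $Z=L=X^{1/72}$ respectively delivers the two stated bounds after the customary $\eps$-adjustment, which is harmless here since both target exponents $1/54$ and $1/324$ leave a little numerical slack.

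To prove the uniform estimate, I would proceed by a case analysis driven by Dirichlet's approximation theorem applied to the cubic coefficient $\alp_3$: fix coprime integers $a,q$ with $q\le X^3/Z$ and $|q\alp_3-a|\le Z/X^3$. When $q>Z^{8/9}$, the cubic Weyl inequality in the form (\ref{3.4}) gives directly
\[
|f(\bfalp;X)|\ll X^{1+\eps}\bigl(q^{-1}+X^{-1}+qX^{-3}\bigr)^{1/4}\ll X^{1+\eps}Z^{-2/9},
\]
since the contributions of $X^{-1/4}$ and $(qX^{-3})^{1/4}\ll Z^{-1/4}$ are both dominated by $Z^{-2/9}$ throughout the stated range.

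When instead $q\le Z^{8/9}$, the coefficient $\alp_3$ admits a particularly good rational approximation with small denominator, and the hypothesis $\bfalp\in\grk(Z)$ must be exploited. Choosing $a_j'$ nearest to $q\alp_j$ for $j=1,2$, the triple $(a_1',a_2',a)$ reduces, after dividing out any common factor with $q$, to a coprime approximation of $\bfalp$ with denominator at most $q\le Z$. Since $\bfalp\notin\grK(Z)$, this reduced approximation must fail the major arc condition at either $j=1$ or $j=2$, giving $\|q\alp_1\|\gg qZ/X$ or $\|q\alp_2\|\gg qZ/X^2$. Subdividing $f(\bfalp;X)$ into arithmetic progressions modulo $q$ then produces inner sums whose leading (quadratic in the new variable) coefficient inherits the poor approximation of $\alp_2$, and an application of Weyl's inequality for quadratic exponential sums yields a bound of the required strength; a further descent step handles the residual sub-case in which only $\alp_1$ is poorly approximated.

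The main obstacle is executing this second case rigorously so that the same exponent $2/9$ emerges as in the first case, which requires careful bookkeeping of how the subdivision modulo $q$ affects the quadratic and linear coefficients of the inner sums, and how the coprimality condition in the definition of $\grK(Z)$ interacts with the reduction to the best simultaneous rational approximation. The overall template is standard in the cubic Vinogradov setting, closely paralleling Weyl-type estimates developed in the author's earlier work \cite{Woo2012b} on the asymptotic formula in Waring's problem, to which I would refer for the detailed execution.
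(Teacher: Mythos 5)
The paper's own proof of this lemma is a one-line citation: it is the case $k=3$ of \cite[Lemma 4.1]{Woo2021}. Your proposal instead attempts a self-contained proof, and as it stands it has a genuine gap. The first case of your dichotomy ($q>Z^{8/9}$) is fine: there (\ref{3.3}) applies directly and yields $X^{1+\eps}Z^{-2/9}$. But the entire substance of the lemma lies in the complementary case $q\le Z^{8/9}$, and this you have only gestured at. Splitting into arithmetic progressions modulo $q$ does not behave as cleanly as you suggest: for $x=qy+r$ the coefficient of $y^3$ is $q^3\alp_3$, which is only within $O(q^2ZX^{-3})$ of an integer, so stripping the cubic phase by partial summation costs a factor of order $1+Z/q$, which can be as large as $Z$ when $q$ is small; worse, the coefficient of $y^2$ is essentially $q^2\alp_2\pmod 1$, and the hypothesis $\|q\alp_1\|>qZ/X$ or $\|q\alp_2\|>qZ/X^2$ coming from $\bfalp\in\grk(Z)$ gives no usable control on rational approximations to $q^2\alp_2$ (multiplication by $q$ can destroy the poor approximability). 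So it is not at all clear that your second case delivers $Z^{-2/9}$, and no argument is given; the ``further descent step'' for the sub-case where only $\alp_1$ is badly approximable is likewise unproven. The citation of \cite{Woo2012b} does not repair this: that paper treats single-coefficient Weyl sums $\sum e(\alp x^k)$ and contains no multidimensional Weyl estimate on $\grk(Z)$. The standard way to execute your second case is via a simultaneous-approximation lemma of Vaughan--Baker type (if $|f(\bfalp;X)|$ is large then all three coefficients admit rational approximations to a common small denominator), which is in effect what \cite[Lemma 4.1]{Woo2021} packages.

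A secondary, smaller point: your claimed $\eps$-removal is not justified. Since $Q^2=X^{1/12}$ and $L=X^{1/72}$, one has $(2/9)\cdot(1/12)=1/54$ and $(2/9)\cdot(1/72)=1/324$ exactly, so the bound $X^{1+\eps}Z^{-2/9}$ gives only $X^{1-1/54+\eps}$ and $X^{1-1/324+\eps}$ --- there is no ``numerical slack'' as asserted. This is cosmetic for the applications (Lemmata \ref{lemma6.2} and \ref{lemma6.4} would tolerate an extra $X^\eps$), but the statement as you derive it is strictly weaker than the lemma, and in any case the derivation rests on the unproved second case above.
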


\begin{proof} This is the case $k=3$ of \cite[Lemma 4.1]{Woo2021}.
\end{proof}

\begin{lemma}\label{lemma6.2} Suppose that $\bfh\in \dbZ^3$ and $2s\ge 11$. Then one 
has
\[
T_s(\grW_2;\bfh)\ll X^{2s-6-1/110}.
\]
\end{lemma}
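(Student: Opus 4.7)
The plan is to estimate $T_s(\grW_2;\bfh)$ by combining a pointwise Weyl-type bound on $\grn$ with a mean value estimate for the twelfth moment of $f(\bfalp;X)$. First I would absorb the phase $e(-\bfalp\cdot\bfh)$ via the triangle inequality, then apply H\"older's inequality by splitting $|f(\bfalp;X)|^{2s}=|f(\bfalp;X)|^{2s-12}\cdot|f(\bfalp;X)|^{12}$, majorising the first factor pointwise on $\grn\supseteq\grW_2$ and integrating the second factor over all of $[0,1)^3$:
\[
T_s(\grW_2;\bfh)\le \Bigl(\sup_{\bfalp\in\grn}|f(\bfalp;X)|\Bigr)^{2s-12}\int_{[0,1)^3}|f(\bfalp;X)|^{12}\d\bfalp.
\]
Lemma \ref{lemma6.1} supplies $\sup_\grn|f|\ll X^{1-1/54}$, while the main conjecture for the cubic Vinogradov mean value theorem established by the author in \cite{Woo2016} gives $\int_{[0,1)^3}|f|^{12}\d\bfalp\ll X^{6+\eps}$. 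Multiplying, I would obtain
\[
T_s(\grW_2;\bfh)\ll X^{2s-6-(2s-12)/54+\eps},
\]
and whenever $2s\ge 13$ the saving $(2s-12)/54$ strictly exceeds $1/110+\eps$ upon choosing $\eps$ sufficiently small, delivering the conclusion at once.

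The main obstacle is the critical case $2s=12$, arising when $s=6$, in which the pure Weyl saving vanishes. To salvage the required polynomial saving of size $X^{1/110}$, I would exploit the finer structural inclusion $\grW_2\subseteq([0,1)^2\times\grM)\setminus\grN$ and replace the crude bound $\int|f|^{12}\d\bfalp\ll X^{6+\eps}$ by a pruning estimate of the shape $\int_{\grW_2}|f|^{12}\d\bfalp\ll X^6 L^{-c}$ for some absolute constant $c>0$. This is plausible because the full asymptotic $\int_{[0,1)^3}|f|^{12}\d\bfalp\sim CX^6$, for a positive constant $C$, is accounted for entirely by the simultaneous major arcs $\grN$, so the minor-arc complement $\grn$, and a fortiori $\grW_2$, carries only lower-order mass. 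With $L=X^{1/72}$ as fixed in \S5, any $c>72/110$ then suffices to yield the desired bound at $2s=12$, and the endpoint $2s=11$ would follow by interpolating with the case $2s=12$.

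The hard part will be making this pruning rigorous. Naive H\"older splits combining the Weyl bound on $\grn$ with the subcritical cubic Vinogradov estimate $\int|f|^{10}\d\bfalp\ll X^{5+\eps}$ yield only $\int_\grn|f|^{12}\d\bfalp\ll X^{7-1/27+\eps}$, which is in fact worse than the global bound, so a more delicate argument is needed. The required saving must be extracted from a precise asymptotic evaluation of the major-arc integral $\int_\grN|f|^{12}\d\bfalp$ in terms of the associated singular series and singular integral, in the spirit of the analyses developed by Arkhipov \cite{Ark1984} and adapted in the author's recent treatment of the Hilbert--Kamke problem \cite{Woo2021}.
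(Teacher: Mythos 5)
Your argument goes through only when $2s\ge 13$, but the lemma is needed precisely at the critical cases $2s=12$ (the case $s=6$, which is the whole point of Theorem \ref{theorem1.1}) and $2s=11$, and there your proposal has a genuine gap. The rescue you suggest rests on the pruning estimate $\int_{\grW_2}|f(\bfalp;X)|^{12}\d\bfalp\ll X^{6}L^{-c}$, which you do not prove and which cannot simply be quoted: at the critical exponent the known global bound is only $\ll X^{6+\eps}$, the asymptotic $\int_{[0,1)^3}|f|^{12}\d\bfalp\sim CX^6$ you invoke is not an established result, and a power-saving bound for the critical twelfth moment restricted off the major arcs is essentially the $\eps$-removal/asymptotic problem at the critical point --- a statement at least as deep as the lemma itself, and circular in your formulation, since showing that the complement of $\grN$ carries lower-order mass is exactly what an asymptotic evaluation would require. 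The closing suggestion to recover $2s=11$ ``by interpolating with the case $2s=12$'' is also not worked out; with the moments actually at your disposal (tenth moment $\ll X^{5+\eps}$, hypothetical twelfth moment $\ll X^{6-c}$), H\"older gives exponents well above $2s-6-1/110$.

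The paper avoids all of this by using the full definition $\grW_2=\bigl([0,1)^2\times\grM\bigr)\cap\grn$ rather than only $\grW_2\subseteq\grn$. Pull out $2s-6$ factors (not $2s-12$) with the Weyl bound $\sup_{\bfalp\in\grn}|f(\bfalp;X)|\ll X^{1-1/54}$ of Lemma \ref{lemma6.1}, and estimate the remaining sixth power by first integrating over $(\alp_1,\alp_2)\in[0,1)^2$: by orthogonality that sixth moment is the quadratic Vinogradov mean value, hence $\ll X^{3+\eps}$ uniformly in $\alp_3$, while the constraint $\alp_3\in\grM$ contributes only $\text{mes}(\grM)\ll Q^2X^{-3}$. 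This yields $T_s(\grW_2;\bfh)\ll (X^{1-1/54})^{2s-6}Q^2X^{\eps}$, and since $Q=X^{1/24}$ the saving is $(2s-6)/54-1/12\ge 5/54-1/12=1/108>1/110$ for every $2s\ge 11$. In short, the missing idea is to exploit the small measure of the one-dimensional major arcs in $\alp_3$ together with the quadratic (not cubic) mean value theorem, which handles the critical cases your splitting cannot reach.
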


\begin{proof} By applying the triangle inequality to (\ref{5.1}), we find that
\begin{equation}\label{6.1}
T_s(\grW_2;\bfh)\ll \Bigl( \sup_{\bfalp\in \grn}|f(\bfalp;X)|\Bigr)^{2s-6}\int_\grM 
\int_0^1\int_0^1 |f(\bfalp;X)|^6\d\bfalp .
\end{equation}
Here, by orthogonality, the inner mean value
\[
\int_{[0,1)^2}|f(\bfalp;X)|^6\d\alp_1\d\alp_2
\]
counts the integral solutions of the system of equations
\[
\sum_{i=1}^3(x_i^j-y_i^j)=0\quad (j=1,2),
\]
with $1\le x_i,y_i\le X$ $(1\le i\le 3)$, and with each solution $\bfx,\bfy$ being counted with 
the unimodular weight
\[
e\biggl( \alp_3\sum_{i=1}^3(x_i^3-y_i^3)\biggr) .
\]
Making use of the familiar bound in the quadratic case of Vinogradov's mean value theorem, 
and observing that $\text{mes}(\grM)\ll Q^2X^{-3}$, we thus conclude that
\begin{equation}\label{6.2}
\int_\grM \int_0^1\int_0^1 |f(\bfalp;X)|^6\d\bfalp \ll X^{3+\eps}\text{mes}(\grM)\ll 
Q^2X^\eps .
\end{equation}

\par Substituting the bound (\ref{6.2}) into (\ref{6.1}), and invoking Lemma 
\ref{lemma6.1}, we obtain
\[
T_s(\grW_2;\bfh)\ll (X^{1-1/54})^{2s-6}Q^2X^\eps \ll X^{2s-6}(X^{\eps-5/54}Q^2).
\]
Since $Q=X^{1/24}$, the conclusion of the lemma follows at once.
\end{proof}

The analysis of the set of arcs $\grW_3$ is accomplished via the standard literature.

\begin{lemma}\label{lemma6.3} When $u>8$, one has
\[
\int_\grN |f(\bfalp ;X)|^u\d\bfalp \ll_u X^{u-6}.
\]
\end{lemma}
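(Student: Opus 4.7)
The plan is to exploit the standard Hardy-Littlewood approximation for $f(\bfalp;X)$ on the major arcs $\grN$, and then check that the hypothesis $u>8$ is exactly enough for the resulting truncated singular series to be $O_u(1)$. This is a cubic Vinogradov analogue of the familiar major arc mean value estimates of Waring's problem.

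On each arc $\grK(q,\bfa;Q^2)$, write $\bfbet=\bfalp-\bfa/q$ and introduce the auxiliary integral
\[
v(\bfbet;X)=\int_0^X e(\bet_1\gam+\bet_2\gam^2+\bet_3\gam^3)\d\gam.
\]
The standard approximation lemma (Euler-Maclaurin summation over residue classes mod $q$, as in \cite[Theorem 7.2]{Vau1997} adapted to the Vinogradov setting) yields
\[
f(\bfalp;X)=q^{-1}S(q,\bfa)v(\bfbet;X)+E(\bfalp),
\]
with error satisfying $|E(\bfalp)|\ll q\bigl(1+X|\bet_1|+X^2|\bet_2|+X^3|\bet_3|\bigr)$. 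Since $q\le Q^2$ and $X^j|\bet_j|\le Q^2$ throughout $\grN$, one has $|E|\ll Q^4$. Applying $|f|^u\ll |q^{-1}S(q,\bfa)v(\bfbet;X)|^u+|E|^u$, the integral over $\grN$ splits into a main contribution and an error.

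For the main contribution, substituting $\eta_j=X^j\bet_j$ gives $v(\bfbet;X)=X\cdot I(\bfeta)$ and $\d\bfbet=X^{-6}\d\bfeta$, so summing over arcs and extending the domains of summation/integration,
\[
\sum_{q\le Q^2}\sum_{\substack{\bfa\bmod q\\(q,\bfa)=1}}\int_{\grK(q,\bfa;Q^2)}|q^{-1}S(q,\bfa)v(\bfbet;X)|^u\d\bfalp \le X^{u-6}\Biggl(\sum_{q=1}^\infty\sum_{\substack{\bfa\bmod q\\(q,\bfa)=1}}|q^{-1}S(q,\bfa)|^u\Biggr)\int_{\dbR^3}|I(\bfeta)|^u\d\bfeta.
\]
Invoking the convergence statements recorded immediately after (\ref{1.7})---namely that $\grS_s(\mathbf{0})$ converges for $s>4$ and $\grJ_s(\mathbf{0})$ converges for $s>7/2$---both bracketed factors are $O_u(1)$ provided $u>8$. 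This yields a bound $\ll X^{u-6}$ for the main contribution.

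The error contribution is handled crudely: $\text{mes}(\grN)\ll Q^{14}X^{-6}$, hence $\int_\grN|E|^u\d\bfalp\ll Q^{14+4u}X^{-6}$, which, with $Q=X^{1/24}$, is dominated by $X^{u-6}$ for any $u\ge 7/10$. The main (conceptual) obstacle is simply that the hypothesis $u>8$ is sharp and coincides exactly with the convergence threshold of the cubic singular series of Arkhipov \cite{Ark1984}; there is no slack in the exponent, and all remaining details are bookkeeping already laid out in \cite{Ark1984} or \cite[Chapter 3]{AKC2004}.
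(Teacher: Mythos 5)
Your argument is correct: the standard major-arc approximation \cite[Theorem 7.2]{Vau1997} on each arc of $\grN$, the absolute-convergence thresholds of Arkhipov and Arkhipov--Chubarikov--Karatsuba for the singular series (exponent $u>8$) and singular integral (exponent $u>7$) recorded after (\ref{1.7}), and the crude treatment of the $O(Q^4)$ approximation error on a set of measure $O(Q^{14}X^{-6})$ together give exactly the bound $\ll_u X^{u-6}$. This is precisely the argument underlying the paper's one-line citation of \cite[Lemma 7.1]{Woo2017} and the methods of \cite{AKC2004}, so your proof coincides with the paper's (implicit) approach, merely written out in full.
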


\begin{proof} This is essentially \cite[Lemma 7.1]{Woo2017}, following by the methods of 
\cite{AKC2004}.
\end{proof}

We may now announce our estimate for $T_s(\grW_3;\bfh)$.

\begin{lemma}\label{lemma6.4} Suppose that $\bfh\in \dbZ^3$ and $s\ge 5$. Then one 
has
\[
T_s(\grW_3;\bfh)\ll X^{2s-6-1/324}.
\]
\end{lemma}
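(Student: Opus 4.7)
The strategy is the standard one for handling the ``intermediate'' arcs in a Hardy-Littlewood dissection: combine a pointwise Weyl-type bound on the narrower minor arcs $\grp$ with a mean value estimate on the wider major arcs $\grN$. The key observation is that
\[
\grW_3\subseteq \grN\setminus \grP\subseteq \grp,
\]
so that on $\grW_3$ the pointwise bound of Lemma \ref{lemma6.1} applies, while the integration takes place inside $\grN$ where Lemma \ref{lemma6.3} gives the desired mean square control.

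The plan is to split the exponent $2s$ as $2s=(2s-9)+9$. Applying the triangle inequality to the definition (\ref{5.1}) and using $\grW_3\subseteq \grN\cap\grp$, I obtain
\[
T_s(\grW_3;\bfh)\le \Bigl( \sup_{\bfalp \in \grp}|f(\bfalp;X)|\Bigr)^{2s-9}\int_\grN |f(\bfalp;X)|^9\d\bfalp .
\]
The choice of $9$ is made because it is the smallest integer exceeding the threshold $8$ required by Lemma \ref{lemma6.3}, and because for $s\ge 5$ one has $2s-9\ge 1$, which is precisely what is needed to secure a saving of size $1/324$.

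Invoking the second bound of Lemma \ref{lemma6.1} for the supremum and the case $u=9$ of Lemma \ref{lemma6.3} for the integral, we obtain
\[
T_s(\grW_3;\bfh)\ll \left( X^{1-1/324}\right)^{2s-9}\cdot X^{9-6}=X^{2s-6-(2s-9)/324}.
\]
Since $s\ge 5$ forces $2s-9\ge 1$, the claimed bound $T_s(\grW_3;\bfh)\ll X^{2s-6-1/324}$ follows at once.

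There is no real obstacle here: the two nontrivial ingredients (the refined Weyl bound on $\grp$ and the $\grN$-mean value beyond the critical exponent) have already been quoted, and the only point to keep in mind is the verification that $\grW_3\subseteq \grp$, which is immediate from $\grp=[0,1)^3\setminus \grP$ and $\grW_3\subseteq \grN\setminus \grP$. Thus the exponents chosen in defining $L=X^{1/72}$ and $Q^2=X^{1/12}$ simply have to be consistent with the final savings, which they are.
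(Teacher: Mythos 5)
Your proof is correct and is essentially the paper's own argument: both rest on the containment $\grW_3\subseteq\grN\setminus\grP\subseteq\grp$, the pointwise bound of Lemma \ref{lemma6.1} on $\grp$, and the case $u=9$ of the mean value estimate of Lemma \ref{lemma6.3} over $\grN$. The only (immaterial) difference is the bookkeeping of exponents: the paper writes $2s=(2s-10)+1+9$, bounding $2s-10$ factors trivially by $X$ and a single factor by the supremum over $\grp$, whereas you apply the supremum to all $2s-9$ spare factors, which yields the marginally stronger saving $X^{-(2s-9)/324}$ and in particular the claimed $X^{-1/324}$ for $s\ge 5$.
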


\begin{proof} Since $\grW_3\subseteq \grN\setminus \grP$, we have
\[
\sup_{\bfalp\in \grW_3}|f(\bfalp;X)|\le \sup_{\bfalp\in \grp}|f(\bfalp;X)|.
\]
Thus, by the triangle inequality,
\[
T_s(\grW_3;\bfh)\ll X^{2s-10}\Bigl( \sup_{\bfalp\in \grp}|f(\bfalp;X)|\Bigr) 
\int_\grN |f(\bfalp;X)|^9\d\bfalp .
\]
Then as a consequence of Lemmata \ref{lemma6.1} and \ref{lemma6.3}, we obtain the 
bound
\[
T_s(\grW_3;\bfh)\ll X^{2s-10}\cdot X^{1-1/324}\cdot X^3\ll X^{2s-6-1/324}.
\]
This completes the proof of the lemma.
\end{proof}

\section{The major arc contribution} By substituting the estimates supplied by Lemmata 
\ref{lemma5.1}, \ref{lemma6.2} and \ref{lemma6.4} into (\ref{5.2}), noting also that 
$\grW_4=\grP$, we find that 
whenever $s\ge 6$, one has
\begin{equation}\label{7.1}
B_s(X;\bfh)=T_s(\grP;\bfh)+o(X^{2s-6}).
\end{equation}
In this section we analyse the major arc contribution $T_s(\grP;\bfh)$. This is routine, 
though the small number of available variables requires appropriate recourse to the 
literature.\par

Recall the notation (\ref{1.4}) and (\ref{1.5}). When $\bfalp\in \grP(q,\bfa)\subseteq \grP$, 
put
\[
V(\bfalp;q,\bfa)=q^{-1}S(q,\bfa)I(\bfalp-\bfa/q;X),
\]
where we write
\[
I(\bfbet;X)=\int_0^Xe(\bet_1\gam+\bet_2\gam^2+\bet_3\gam^3)\d\gam 
=XI\left(\bet_1 X,\bet_2 X^2,\bet_3 X^3\right).
\]
We then define the function $V(\bfalp)$ to be $V(\bfalp;q,\bfa)$ when 
$\bfalp\in \grP(q,\bfa)\subseteq \grP$, and to be zero otherwise. It now follows from 
\cite[Theorem 7.2]{Vau1997} that when $\bfalp\in \grP(q,\bfa)\subseteq \grP$, one has
\begin{align*}
f(\bfalp;X)-V(\bfalp)&\ll q+X|q\alp_1-a_1|+X^2|q\alp_2-a_2|+X^3|q\alp_3-a_3|\\
&\ll L^2.
\end{align*}
Thus, uniformly in $\bfalp\in \grP$, we have
\[
|f(\bfalp;X)|^{2s}-|V(\bfalp)|^{2s}\ll X^{2s-1}L^2.
\]
Since $\text{mes}(\grP)\ll L^7X^{-6}$, we deduce from (\ref{5.1}) that
\begin{equation}\label{7.2}
T_s(\grP;\bfh)=\int_\grP|V(\bfalp)|^{2s}e(-\bfalp \cdot \bfh)\d\bfalp +O(L^9X^{2s-7}).
\end{equation}

\par Next, applying the definition of $\grP$ in the familiar manner, we see that
\begin{equation}\label{7.3}
\int_\grP |V(\bfalp)|^{2s}e(-\bfalp \cdot \bfh)\d\bfalp =\grS_s(X;\bfh)\grJ_s(X;\bfh),
\end{equation}
where
\[
\grJ_s(X;\bfh)=\int_\grX |I(\bfbet;X)|^{2s}e(-\bfbet \cdot \bfh)\d\bfbet 
\]
and
\[
\grS_s(X;\bfh)=\sum_{1\le q\le L}\sum_{\substack{1\le \bfa\le q\\ (q,a_1,a_2,a_3)=1}}
q^{-2s}|S(q,\bfa)|^{2s}e_q(-\bfa\cdot \bfh),
\]
in which we write
\[
\grX=[-LX^{-1},LX^{-1}]\times [-LX^{-2},LX^{-2}]\times [-LX^{-3},LX^{-3}].
\]

\par The singular integral (\ref{1.6}) converges absolutely for $s>7/2$ (see 
\cite[Theorem 1.3]{AKC2004} or \cite[Theorem 1]{Ark1984}), and moreover 
\cite[Theorem 7.3]{Vau1997} supplies the bound
\[
I(\bfbet;X)\ll X(1+|\bet_1|X+|\bet_2|X^2+|\bet_3|X^3)^{-1/3}.
\]
Then we infer via two changes of variable that
\begin{align}
\grJ_s(X;\bfh)&=X^{2s-6}\int_{\dbR^3}|I(\bfbet)|^{2s}e\Bigl( -\bet_1\frac{h_1}{X}
-\bet_2\frac{h_2}{X^2}-\bet_3\frac{h_3}{X^3}\Bigr) \d\bfbet +o(X^{2s-6})\notag \\
&=\left( \grJ_s(\bfh)+o(1)\right) X^{2s-6}\ll X^{2s-6}.\label{7.4}
\end{align}
Similarly, the singular series (\ref{1.7}) converges absolutely for $s>4$ (see 
\cite[Theorem 2.4]{AKC2004} or \cite[Theorem 1]{Ark1984}), and in addition 
\cite[Theorem 7.1]{Vau1997} shows that when $(q,a_1,a_2,a_3)=1$, one has 
$S(q,\bfa)\ll q^{2/3+\eps}$. Thus, it follows that
\begin{equation}\label{7.5}
\grS_s(X;\bfh)=\grS_s(\bfh)+o(1)\ll 1.
\end{equation}

\par By substituting (\ref{7.4}) and (\ref{7.5}) into (\ref{7.3}), and thence into (\ref{7.2}), 
we obtain 
\begin{align*}
T_s(\grP;\bfh)&=\left( \grS_s(\bfh)+o(1)\right) \left( \grJ_s(\bfh)+o(1)\right) 
X^{2s-6}+o(X^{2s-6})\\
&=\grS_s(\bfh)\grJ_s(\bfh)X^{2s-6}+o(X^{2s-6}).
\end{align*}
By substituting this relation into (\ref{7.1}), we conclude that
\[
B_s(X;\bfh)=\grS_s(\bfh)\grJ_s(\bfh)X^{2s-6}+o(X^{2s-6}).
\]

\par We note that the absolute convergence of the integral $\grJ_s(\bfh)$ and of the series 
$\grS_s(\bfh)$ shows, via familiar technology from the circle method, that
\[
0\le \grJ_s(\bfh)\ll 1\quad \text{and}\quad 0\le \grS_s(\bfh)\ll 1.
\]
This standard technology also shows that the singular series may be written in the form
\[
\grS_s(\bfh)=\prod_p\varpi_p(s,\bfh),
\]
where for each prime number $p$, the $p$-adic density $\varpi_p(s,\bfh)$ is defined by
\[
\varpi_p(s,\bfh)=\sum_{h=0}^\infty \sum_{\substack{1\le \bfa\le p^h\\ 
(p,a_1,a_2,a_3)=1}}p^{-2sh}|S(p^h,\bfa)|^{2s}e_{p^h}(-\bfa \cdot \bfh).
\]
The positivity of $\grJ_s(\bfh)$ and $\grS_s(\bfh)$ corresponds to the existence of 
non-singular real and $p$-adic solutions to the system (\ref{1.3}). Granted the 
existence of primitive such solutions, the standard theory shows that $\grJ_s(\bfh)\gg 1$ 
and $\grS_s(\bfh)\gg 1$. This confirms the conclusion of Theorem \ref{theorem1.1}.

\section{Non-uniform conclusions: the proof of Theorem 1.2} In our proof of Theorem 
\ref{theorem1.2}, we abandon the uniformity in $\bfh$ implicit in the error term of Theorem
\ref{theorem1.1}, though now we require only that $h_2\ne 0$. The case $h_1\ne 0$ 
having already been handled in Theorem \ref{theorem1.1}, we assume that $h_1=0$ and 
$h_2\ne 0$. In such circumstances, it now follows from (\ref{2.2}) that
\[
g(\bfalp;X)=\sum_{1\le y\le X}e(3h_2y\alp_3).
\]

\par We begin by deriving an analogue of Lemma \ref{lemma3.3}.

\begin{lemma}\label{lemma8.1} Suppose that $\bfh\in \dbZ^3$ and $h_1=0$, $h_2\ne 0$. 
Then one has
\begin{equation}\label{8.1}
\int_{[0,1)^3}|f(\bfalp;2X)^{10}g(\bfalp;X)^2|\d\bfalp \ll_{h_2}X^{6+\eps}.
\end{equation}
\end{lemma}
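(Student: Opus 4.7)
When $h_1=0$, the sum $g(\bfalp;X)=\sum_{y=1}^X e(3h_2 y\alp_3)$ reduces to a one-dimensional Dirichlet kernel in the variable $3h_2\alp_3$, and in particular depends only on $\alp_3$. Although $\int_0^1|g(\alp_3;X)|^2\d\alp_3=X$, the weight $|g|^2$ is essentially concentrated on the $3|h_2|$ short arcs of length $O(1/X)$ surrounding the rationals $\alp_3\in\frac{1}{3h_2}\dbZ$. My plan is to marginalise over $\alp_3$, decompose dyadically according to the size of $|g|$, and invoke small cap decoupling for the cubic moment curve in order to control the $L^{10}$ norm of $f(\bfalp;2X)$ on these thin arcs.

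Denote the mean value to be estimated by $M$, and write
\[
M=\int_0^1|g(\alp_3;X)|^2\tilde J(\alp_3)\d\alp_3,\qquad \tilde J(\alp_3)=\int_{[0,1)^2}|f(\bfalp;2X)|^{10}\d\alp_1\d\alp_2.
\]
Using $|g(\alp_3;X)|\ll \min(X,\|3h_2\alp_3\|^{-1})$, the dyadic level set
\[
A_k=\{\alp_3\in [0,1):2^k\le |g(\alp_3;X)|<2^{k+1}\}
\]
has measure $O(2^{-k})$, and is a union of $O(|h_2|)$ arcs of length $O_{h_2}(2^{-k})$ centred at the rationals $r/(3h_2)$ with $0\le r<3|h_2|$. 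Hence
\[
M\ll \sum_{k=0}^{\lceil \log_2 X\rceil}4^k\int_{A_k\times [0,1)^2}|f(\bfalp;2X)|^{10}\d\bfalp .
\]

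The decisive ingredient is a thin-slab estimate drawn from the small cap decoupling theory of Demeter, Guth and Wang~\cite{DGW2020} for the cubic moment curve: for every $\sigma\in [X^{-1},1]$ and every rational $\rho=r/(3h_2)$,
\[
\int_{[0,1)^2}\int_{|\alp_3-\rho|\le \sigma}|f(\bfalp;2X)|^{10}\d\alp_3\d\alp_1\d\alp_2\ll_{h_2,\eps}X^\eps(\sigma X^5+X^4).
\]
Inserting this bound for each of the $O(|h_2|)$ constituent arcs of $A_k$ (with $\sigma\asymp 2^{-k}/|h_2|$) and summing yields
\[
4^k\int_{A_k\times [0,1)^2}|f(\bfalp;2X)|^{10}\d\bfalp\ll_{h_2} 2^k X^{5+\eps}+4^k X^{4+\eps}.
\]
Since $\sum_{k\le \log_2 X}2^k X^5\ll X^6$ and $\sum_{k\le \log_2 X}4^k X^4\ll X^6$, we obtain $M\ll_{h_2}X^{6+\eps}$, as claimed.

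The principal obstacle is the thin-slab estimate displayed above. An application of H\"older's inequality together with the main conjecture $\int|f|^{12}\d\bfalp\ll X^{6+\eps}$ from \cite{Woo2016} yields only a bound of shape $\sigma^{1/6}X^{5+\eps}$, which for $\sigma=1/X$ falls short by a factor of $X^{5/6}$. The sharper form we require, with its linear $\sigma X^5$ term and its additive $X^4$ diagonal contribution, reflects fine-scale equidistribution along the cubic moment curve and is precisely what the small cap decoupling of \cite{DGW2020} affords. This extra input underlies the whole $h_1=0$ analysis, and the loss of uniformity in $\bfh$ visible in Theorem~\ref{theorem1.2} is an inevitable consequence of relying on it here.
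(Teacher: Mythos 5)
Your overall strategy is recognisably close in spirit to the paper's: exploit that for $h_1=0$ the weight $|g(\bfalp;X)|^2$ concentrates $\alp_3$ near rationals with denominator $3h_2$, and then call on the small cap estimate of Demeter--Guth--Wang. The dyadic bookkeeping and the final summation are fine, and the target bound $X^{6+\eps}$ does follow from the displayed thin-slab estimate. The difficulty is that this thin-slab estimate, which you yourself identify as the decisive ingredient, is not a theorem of \cite{DGW2020} and is not proved in your argument. What \cite{DGW2020} (Theorem 3.3 there, as invoked in the paper as (8.4)) gives is the single endpoint case: the slab of width $X^{-1}$ centred at the origin, namely $\int_{-1/X}^{1/X}\int_{[0,1)^2}|f(\bfalp;2X)|^{10}\d\bfalp\ll X^{4+\eps}$. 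Your claim for all widths $\sigma\in[X^{-1},1]$ and all centres $r/(3h_2)$ is a genuinely stronger statement; simply attributing it to small cap decoupling leaves a gap, since an attempt to cover a wider slab by width-$X^{-1}$ slabs immediately raises the question of uniformity in the centre, which the cited theorem does not address.

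The claim is in fact true and recoverable from (8.4), but an extra argument must be supplied. Writing $F(\alp_3)=\int_{[0,1)^2}|f(\bfalp;2X)|^{10}\d\alp_1\d\alp_2=\sum_n r(n)e(n\alp_3)$ with $r(n)\ge 0$ counting solutions of the linear and quadratic equations with $\sum_i(x_i^3-y_i^3)=n$, one can use a Fej\'er-type majorant (nonnegative, with nonnegative Fourier transform supported in $[-1/X,1/X]$) to show that the number of solutions with $\bigl|\sum_i(x_i^3-y_i^3)-c\bigr|\le X$ is $\ll X^{5+\eps}$ uniformly in $c$; covering then gives $\sum_{|n|\le T}r(n)\ll X^\eps(X^5+TX^4)$, and partial summation yields your bound $X^\eps(\sigma X^5+X^4)$ for every centre. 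This supplementary manoeuvre is essentially the Watt-type kernel argument which the paper deploys once, at the only scale that actually matters: in the paper's proof, orthogonality interprets $|g|^2$ as the constraint $\sum_i(x_i^3-y_i^3)=3h_2(z_1-z_2)$, the $O(X)$ choices of $(z_1,z_2)$ for each admissible value give the factor $X$, and \cite[Lemma 2.1]{Wat1989} reduces the remaining count directly to the single integral over $|\alp_3|\le 1/X$, so that (8.4) alone suffices, with no multi-scale slab estimates and no centre-uniformity issue. So either supply the derivation of your thin-slab estimate along the lines above, or streamline to the counting argument, which reaches the same bound with strictly less input.
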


\begin{proof} By orthogonality, the mean value in (\ref{8.1}) counts the integral solutions of 
the system of equations
\begin{align}
\sum_{i=1}^5(x_i^3-y_i^3)&=3h_2(z_1-z_2),\notag\\
\sum_{i=1}^5(x_i^2-y_i^2)&=0,\label{8.2}\\
\sum_{i=1}^5(x_i-y_i)&=0\notag,
\end{align}
with $1\le x_i,y_i\le 2X$ $(1\le i\le 5)$ and $1\le z_1,z_2\le X$. Thus, we find that
\begin{equation}\label{8.3}
\int_{[0,1)^3}|f(\bfalp;2X)^{10}g(\bfalp;X)^2|\d\bfalp \ll X U_1,
\end{equation}
where $U_1$ counts the number of integral solutions of the system
\begin{align*}
\biggl| \sum_{i=1}^5(x_i^3-y_i^3)\biggr|&<3|h_2|X,\\
\sum_{i=1}^5(x_i^2-y_i^2)&=0,\\
\sum_{i=1}^5(x_i-y_i)&=0,
\end{align*}
with $1\le x_i,y_i\le 2X$ $(1\le i\le 5)$.\par

A standard argument (see \cite[Lemma 2.1]{Wat1989}) shows from here that
\[
U_1\ll |h_2|X\int_{-1/X}^{1/X}\int_0^1\int_0^1|f(\bfalp;2X)|^{10}\d\bfalp .
\]
As a direct consequence of \cite[Theorem 3.3]{DGW2020}, however, we have the bound
\begin{equation}\label{8.4}
\int_{-1/X}^{1/X}\int_0^1\int_0^1|f(\bfalp;2X)|^{10}\d\bfalp \ll X^{4+\eps}.
\end{equation}
Thus we deduce that $U_1\ll |h_2|X^{5+\eps}$, and the upper bound of the lemma 
follows at once from (\ref{8.3}).
\end{proof}

By substituting the estimate (\ref{8.1}) within the argument of the proof of Lemma 
\ref{lemma3.4}, we readily deduce the bound contained in the following lemma.

\begin{lemma}\label{lemma8.2} Suppose that $\bfh\in \dbZ^3$ and $h_1=0$, $h_2\ne 0$. 
Then one has
\[
\sup_{\Gam\in [0,1)}\int_{[0,1)^3}|f(\bfalp;2X)^{10}g(\bfalp, \Gam;X)^2|\d\bfalp 
\ll_{h_2} X^{6+\eps}.
\]
\end{lemma}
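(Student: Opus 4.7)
The plan is to mirror the argument of Lemma \ref{lemma3.4} verbatim, merely substituting the estimate of Lemma \ref{lemma8.1} for the bound $\Tet_5(X;\bfh)\ll X^{10+\eps}$ which powered that earlier proof. Since $h_1=0$, the definition (\ref{2.2}) simplifies to
\[
g(\bfalp,\Gam;X)=\sum_{1\le y\le X}e(y\Gam+3h_2y\alp_3),
\]
and in particular $g(\bfalp,0;X)$ recovers the $g(\bfalp;X)$ appearing in Lemma \ref{lemma8.1}.

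The key step is to apply orthogonality in each of $\alp_1,\alp_2,\alp_3$, recognising $\int_{[0,1)^3}|f(\bfalp;2X)^{10}g(\bfalp,\Gam;X)^2|\d\bfalp$ as a count of integral solutions $(\bfx,\bfy,z_1,z_2)$, with $1\le x_i,y_i\le 2X$ $(1\le i\le 5)$ and $1\le z_1,z_2\le X$, satisfying the system (\ref{8.2}), in which each solution is weighted by the unimodular factor $e(\Gam(z_1-z_2))$. Since this weight has modulus one, removing it can only enlarge the resulting (non-negative) sum, producing the inequality
\[
\sup_{\Gam\in[0,1)}\int_{[0,1)^3}|f(\bfalp;2X)^{10}g(\bfalp,\Gam;X)^2|\d\bfalp \le \int_{[0,1)^3}|f(\bfalp;2X)^{10}g(\bfalp;X)^2|\d\bfalp ,
\]
whereupon an appeal to Lemma \ref{lemma8.1} delivers the claimed bound $\ll_{h_2}X^{6+\eps}$.

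There is no real obstacle here: all of the analytic substance is absorbed into Lemma \ref{lemma8.1}, whose proof rests on the small cap decoupling estimate (\ref{8.4}). The role of the present lemma is purely to certify that the auxiliary twist by $\Gam$, required downstream for compatibility with the shift identity of Lemma \ref{lemma2.1}, does not degrade the previously established bound.
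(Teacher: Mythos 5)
Your proof is correct and follows essentially the same route as the paper: interpret the integral via orthogonality as a count of solutions of the system (\ref{8.2}) weighted by a unimodular factor in $\Gam(z_1-z_2)$, drop the weight by the triangle inequality (equivalently, set $\Gam=0$), and invoke Lemma \ref{lemma8.1}. The sign of the exponential weight differs from the paper's by a harmless relabelling of $z_1$ and $z_2$, so nothing is affected.
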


\begin{proof} We may proceed as in the proof of Lemma \ref{lemma3.4}, adopting the 
notation therein. Thus, the mean value
\[
\int_{[0,1)^3}|f(\bfalp;2X)^{10}g(\bfalp, \Gam;X)^2|\d\bfalp 
\]
counts the integral solutions of the system (\ref{8.2}) with each solution $\bfx,\bfy,\bfz$ 
being counted with weight $e\left( -\Gam (z_1-z_2)\right)$. Since this weight is unimodular, 
it follows via orthogonality that 
\[
\sup_{\Gam\in [0,1)}\int_{[0,1)^3}|f(\bfalp;2X)^{10}g(\bfalp, \Gam;X)^2|\d\bfalp 
\le \int_{[0,1)^3}|f(\bfalp;2X)^{10}g(\bfalp;X)^2|\d\bfalp .
\]
The conclusion of the lemma therefore follows at once from Lemma \ref{lemma8.1}.
\end{proof}

In the interests of concision, we extract from Lemma \ref{lemma8.2} the estimate
\begin{equation}\label{8.5}
\sup_{\Gam\in [0,1)}\int_{[0,1)^3}|f(\bfalp;2X)^{10}g(\bfalp, \Gam;X)^6|\d\bfalp 
\ll_{h_2} X^{10+\eps}
\end{equation}
by means of the trivial bound $|g(\bfalp,\Gam;X)|=O(X)$. Equipped with this as a direct 
substitute for the estimate delivered by Lemma \ref{lemma3.4}, we see that no 
modification whatsoever is required in the discussion of \S\S4 to 7 in order to deliver the 
asymptotic formula (\ref{1.8}) provided that $s\ge 6$, and $X$ is sufficiently large in terms 
of $h_2$. This completes the proof of Theorem \ref{theorem1.2}. The reason that the latter 
condition concerning $h_2$ must be imposed is simply that the minor arc bound
\[
I_s(\grm(Q);X;\bfh)\ll_{h_2}X^{2s-6+\eps}Q^{-\del/4}
\]
derived in the analogue of Lemma \ref{lemma4.1} must now have dependence on $h_2$ in 
the implicit constant, as a consequence of this same dependence in (\ref{8.5}). It would not 
be difficult to ensure that the asymptotic formula (\ref{1.8}) remains valid, with an 
acceptable error term, for values of $h_2$ satisfying $|h_2|\le X^{1/2}$, or indeed a little 
larger still. However, in order to permit values of $h_2$ having absolute value nearly as 
large as $X^2$, in order to accommodate the most general situation, one would need to 
obtain sharp variants of the cap estimate (\ref{8.4}). We shall have more to say on such 
matters in a future communication.
     
\bibliographystyle{amsbracket}

\begin{thebibliography}{18}

\bibitem{Ark1984} G. I. Arkhipov, \emph{The Hilbert-Kamke problem}, Izv. Akad. Nauk 
SSSR Ser. Mat. \textbf{48} (1984), no. 1, 3--52.

\bibitem{AKC2004} G. I. Arkhipov, V. N. Chubarikov and A. A. Karatsuba, 
\emph{Trigonometric sums in number theory and analysis}, De Gruyter Expositions in 
Mathematics, \textbf{39}, Walter de Gruyter, Berlin, 2004.

\bibitem{BB2010} V. Blomer and J. Br\"udern, \emph{The number of integer points on 
Vinogradov's quadric}, Monatsh. Math. \textbf{160} (2010), no. 3, 243--256.

\bibitem{BH2021} J. Brandes and K. Hughes, \emph{On the inhomogeneous Vinogradov 
system}, preprint, arxiv:2110.02366.

\bibitem{Bru1988} J. Br\"udern, \emph{A problem in additive number theory}, Math. Proc. 
Cambridge Philos. Soc. \textbf{103} (1988), no. 1, 27--33.

\bibitem{BW2014} J. Br\"udern and T. D. Wooley, \emph{Subconvexity for additive 
equations: pairs of undenary cubic forms}, J. Reine Angew. Math. \textbf{696} (2014), 
31--67.

\bibitem{BW2019} J. Br\"udern and T. D. Wooley, \emph{An instance where the major and 
minor arc integrals meet}, Bull. London Math. Soc. \textbf{51} (2019), no. 6, 1113--1128.

\bibitem{DGW2020} C. Demeter, L. Guth and H. Wang, \emph{Small cap decouplings}, 
Geom. Funct. Anal. \textbf{30} (2020), no. 4, 989--1062.

\bibitem{Est1962} T. Estermann, \emph{A new application of the 
Hardy-Littlewood-Kloosterman method}, Proc. London Math. Soc. (3) \textbf{12} (1962), 
425--444.

\bibitem{GT2010} B. Green and T. Tao, \emph{Linear equations in primes}, Ann. of Math. 
(2) \textbf{171} (2010), 1753--1850.

\bibitem{HB1996} D. R. Heath-Brown, \emph{A new form of the circle method, and its 
application to quadratic forms}, J. Reine Angew. Math. \textbf{481} (1996), 149--206.

\bibitem{Klo1927} H. D. Kloosterman, \emph{On the representation of numbers in the form 
$ax^2+by^2+cz^2+dt^2$}, Acta Math. \textbf{49} (1927), 407--464.

\bibitem{Vau1997} R. C. Vaughan, \emph{The Hardy-Littlewood method}, 2nd edition, 
Cambridge University Press, Cambridge, 1997.

\bibitem{Wat1989} N. Watt, \emph{Exponential sums and the Riemann zeta-function II}, 
J. London Math. Soc. (2) \textbf{39} (1989), no. 3, 385--404.

\bibitem{Woo2012b} T. D. Wooley, \emph{The asymptotic formula in Waring's problem}, 
Internat. Math. Res. Notices \textbf{2012} (2012), no. 7, 1485--1504.

\bibitem{Woo2015a} T. D. Wooley, \emph{Rational solutions of pairs of diagonal equations, 
one cubic and one quadratic}, Proc. London Math. Soc. (3) \textbf{110} (2015), no. 2, 
325--356.

\bibitem{Woo2016} T. D. Wooley, \emph{The cubic case of the main conjecture in 
Vinogradov's mean value theorem}, Adv. Math. \textbf{294} (2016), 532--561.

\bibitem{Woo2017} T. D. Wooley, \emph{Discrete Fourier restriction via efficient 
congruencing}, Internat. Math. Res. Notices \textbf{2017} (2017), no. 5, 1342--1389.

\bibitem{Woo2021} T. D. Wooley, \emph{Subconvexity and the Hilbert-Kamke problem}, 
submitted, arxiv:2201.02699.

\bibitem{Woo2021b} T. D. Wooley, \emph{Subconvexity in inhomogeneous Vinogradov 
systems}, preprint.

\end{thebibliography}
\providecommand{\bysame}{\leavevmode\hbox to3em{\hrulefill}\thinspace}

\end{document}